\newcommand{\be}{\begin{eqnarray}}
\newcommand{\ee}{\end{eqnarray}}
\newcommand{\by}{\begin{eqnarray*}}
\newcommand{\ey}{\end{eqnarray*}}
\newcommand{\bn}{\begin{enumerate}}
\newcommand{\en}{\end{enumerate}}
\newcommand{\ei}{\end{itemize}}
\newtheorem{theorem}{Theorem}[section]
\newtheorem{lemma}[theorem]{Lemma}
\newtheorem{remark}[theorem]{Remark}
\newtheorem{proposition}[theorem]{Proposition}
\renewcommand{\theequation}{\arabic{section}.\arabic{equation}}
\numberwithin{equation}{section}
\begin{document}
\date{}
\title{\bf The Smoluchowski-Kramers approximation with distribution-dependent potential and highly oscillating force  \footnote{This work was supported by
the Natural Science Foundation of Jiangsu Province, BK20230899 and
the National Natural Science Foundation of China, 11771207.
}}
\author{ Chungang Shi$^{1}$\footnote{Corresponding author, shichungang@njust.edu.cn } \hskip1cm Wei Wang$^{2}$\footnote{wangweinju@nju.edu.cn} \\
\texttt{{\scriptsize $^{1}$School of Mathematics and Statistics, Nanjing University of Science and Technology,
Nanjing, 210094, P. R. China}}\\
\texttt{{\scriptsize $^{2}$Department of Mathematics, Nanjing University,
Nanjing, 210023, P. R. China}}}\maketitle
\begin{abstract}
An approximation is derived for a Langevin equation with distribution-dependent potential and state-dependent, randomly fast oscillation.  By some estimates and a diffusion approximation the limiting equation is shown to be distribution-dependent stochastic differential equation (SDEs) driven by white noise.
\end{abstract}

\textbf{Key Words:} Langevin equation;  randomly fast oscillation; diffusion approximation; tightness; martingale.


\section{Introduction}\label{sec:intro}
  \setcounter{equation}{0}
  \renewcommand{\theequation}
{1.\arabic{equation}}
The Langevin equation describes the motion of a small particle in some media with fluctuation~\cite{RZ},
\begin{equation}\label{a1}
m\ddot{x}=-\nabla V(x)-\alpha\dot{x}+\sqrt{2\alpha\beta^{-1}}\dot{W}(t).
\end{equation}
Here $m$ is the mass of the particle, $\dot{W}(t)$ is the white noise describing the fluctuation and  $\beta^{-1}=k_{B}T$, $k_{B}$ denotes Boltzmann's constant and $T$ the absolute temperture.  This is Newton's equation of motion with two additional terms, a linear dissipation term $\alpha\dot{x}$ and stochastic force $\xi(t)=\sqrt{2\alpha\beta^{-1}}\dot{W}(t)$. They are related through fluctuation-dissipation relationship~\cite{K}
\begin{equation*}
\mathbb{E}(\xi(t)\xi(s))=\alpha\beta^{-1}\delta(t-s).
\end{equation*} 

When the particle's mass goes to zero in equation~(\ref{a1}), then the limit is called as the Smoluchowski-Kramers approximation.
There are lots of works on the Smoluchowski-Kramers approximation. For the Gaussian noise case, Hottovy~\cite{HMVW}  considered the Smoluchowski-Kramers limit of stochastic differential equations with arbitrary state-dependent friction by means of a theory of convergence of stochastic integrals. Cerrai et al.~\cite{CX,CX1} studied a Smoluchowski-Kramers approximation for an infinite dimensional system with state-dependent damping. Zine~\cite{Zi} investigated a Smoluchowski-Kramers approximation for a singular stochastic wave equations in two dimensions. For the highly oscillating random force case,
Wang et al.~\cite{WRDH} considered a nonlinear stochastic heat equation with randomly fast oscillation on the body force and on the boundary. By applying diffusion approximation, the ensemble averaged model which is a stochastic partial differential equation is derived with a homogenous boundary condition. Lv and Wang~\cite{LW} considered both nonlinear heat equation and singularly perturbed nonlinear wave equation with highly oscillating random force on the boundary and strong interaction and the limit in the sense of distribution is derived by using a diffusion approximation. Very recently, Lv and Wang~\cite{LW1} investigated a Smoluchowski-Kramers approximation with state dependent damping and highly random oscillation. For more related results on approximation we refer to~\cite{SW,F,CF1,CF3}.

Based on the above works, if we think about the interaction of the surrounding medium on the particle, in the paper, then we  consider the following stochastic differential equation with distribution-dependent potential and randomly fast oscillation, 
\begin{eqnarray}\label{equ:main}
\epsilon\ddot{x}^{\epsilon}(t)+\alpha\dot{x}^{\epsilon}(t)&=&-\nabla V(x^{\epsilon}(t),\mu^{\epsilon}_{t})+\frac{1}{\sqrt{\epsilon}}\int_{\mathbb{R}^{d}}\eta^{\epsilon}(t,x)\mu_{t}^{\epsilon}(dx)\\
x^{\epsilon}(0)&=&x_{0},\quad\dot{x}^{\epsilon}(0)=y_{0}\in \mathbb{R}^{d},\nonumber
\end{eqnarray}
where $\epsilon>0$ denotes the mass of the particle, $\alpha>0$ is the friction coefficient, $x^{\epsilon}(t)$ is the position of the particle at time $t$, $\mu_{.}^{\epsilon}=\mathcal{L}(x^{\epsilon}(\cdot))$ is the distribution of $x^{\epsilon}(\cdot)$. The external force $\eta^{\epsilon}(t,\cdot)=\eta(\frac{t}{\epsilon},\cdot)$ is highly oscillating random force satisfying some mixing property in time detailed in next section and $\int_{\mathbb{R}^{d}}\eta^{\epsilon}(t,x)\mu_{t}^{\epsilon}(dx)$ represents the fluctuating force of the surrounding medium on the particle. $\nabla V(\cdot): \mathbb{R}^{d}\to\mathbb{R}^{d}$ is a deterministic force which is assumed to be Lipschitz continuous function in two variables. 

Our goal is to obtain the limit of $x^{\epsilon}(t)$ as $\epsilon\to0$ which is viewed as some kind of Smoluchowski-Kramers approximation which describes the motion of a particle with small mass. In order to derive the approximation equation that is the limit of $\{x^{\epsilon}(t)\}_{0< \epsilon\leq 1}$, there are two key steps when we apply diffusion approximation to obtain the approximation equation. On the one hand, we need to show the tightness of solution $\{x^{\epsilon}(t)\}_{0< \epsilon\leq 1}$ of the equation (\ref{equ:main}) in space $C(0,T;\mathbb{R}^{d})$ for any $T>0$. In order to obtain some uniformly bounded estimations on the solution, considering the dependence of the fluctuating force on the distribution of the particle, we have to make some stronger assumptions about the fluctuating force specified later which is different from the previous works~\cite{LW, LW1, SW} and several additional efforts have to be done.
Because of the existence of singular term $\frac{1}{\sqrt{\epsilon}}\int_{\mathbb{R}^{d}}\eta^{\epsilon}(t,x)\mu_{t}^{\epsilon}(dx)$, we introduce a martingale to overcome the difficulty and to derive some bounded estimates and continuity of the solution $x^{\epsilon}$. On the other hand, we adopt the method of diffusion approximation to take the limit. We need to construct another martingale and pass the limit $\epsilon\to0$ to show that the limit solves a martingale problem which is equivalent to solving a stochastic differential equation. Then the stochastic differential equation is the approximation equation in the sense of distribution.

The rest of the paper is organized as follows. In section \ref{Pre;Res}, we give some notations, assumptions and our main result. Section \ref{Tight} is devoted to the proof of the tightness of the solution. The last section takes the limit of the solution by diffusion approximation.

\section{Preliminary and main result}\label{Pre;Res}
  \setcounter{equation}{0}
  \renewcommand{\theequation}
{2.\arabic{equation}}

Let $\|\cdot\|$ and $\langle\cdot,\cdot\rangle$ denote the norm and the product of $\mathbb{R}^{d}$ respectively. Let $(\Omega,\mathcal{F},\mathbb{P})$ be a complete probability space and $\mathbb{E}$ is the expectation with respect to $\mathbb{P}$. In addition, let $(\tilde{\Omega},\tilde{\mathcal{F}},\tilde{\mathbb{P}})$ be a version of $(\Omega,\mathcal{F},\mathbb{P})$, $\tilde{\mathbb{E}}$ is the expectation with respect to $\tilde{\mathbb{P}}$. Let $\mathcal{P}_{2}(\mathbb{R}^{d})$ denote the space of Borel probability measures on $\mathbb{R}^{d}$ with 
\begin{equation*}
\int_{\mathbb{R}^{d}}\|x\|^{2}\mu(dx)<\infty
\end{equation*} 
for every $\mu\in\mathcal{P}_{2}$. For any $\mu, \nu$ in $\mathcal{P}_{2}(\mathbb{R}^{d})$, define the following Monge-Kantorovich (or Wasserstein) distance:
\begin{equation}\label{dMK}
\mathcal{W}_{2}(\mu,\nu)=\inf_{\pi\in \Pi(\mu,\nu)}\Big[\int\int_{\mathbb{R}^{d}\times\mathbb{R}^{d}}\|x-y\|^{2}\Pi(dx,dy)\Big]^{\frac{1}{2}},
\end{equation}
where $\Pi(\mu,\nu)$ is the set of Borel probability measures $\pi$ on $\mathbb{R}^{d}\times\mathbb{R}^{d}$ with first and second marginals $\mu$ and $\nu$. Equivalently, for $\mu,\nu\in\mathcal{P}_{2}$,
\begin{equation}\label{rvMK}
\mathcal{W}_{2}(\mu,\nu)=\inf_{(X,\bar{X})}[\mathbb{E}\|X-\bar{X}\|^{2}]^{\frac{1}{2}}
\end{equation}
with random variables $X$ and $\bar{X}$ in $\mathbb{R}^{d}$ having laws $\mu$ and $\nu$, respectively.

Next, we make the following assumptions on the coefficients and noise.

$(\mathbf{H_{1}}).$ There exists a constant $L_{V}>0$ such that
\begin{equation}
\|\nabla V(x,\mu)-\nabla V(y,\nu)\|\leq L_{V}(\|x-y\|+\mathcal{W}_{2}(\mu,\nu)),
\end{equation}
for any $\mu,\nu\in\mathcal{P}_{2}$ and $x,y\in\mathbb{R}^{d}$.

$(\mathbf{H_{2}}).$ 
 \begin{equation*}\label{h1}
\int_{0}^{\infty}m(t)dt\triangleq K<\infty,
\end{equation*}
where
\begin{equation*}
m(t)=\sup_{s\geq0}\sup_{\substack{A\in\mathcal{F}_{0}^{s}\\B\in\mathcal{F}_{s+t}^{\infty}}}|\mathbb{P}(AB)-\mathbb{P}(A)\mathbb{P}(B)|
\end{equation*}
and
\begin{equation*}
\mathcal{F}_{s}^{t}=\sigma\{\eta(\tau,u): s\leq\tau\leq t\}
\end{equation*}
for any function $u\in\mathbb{R}^{d}$.
Moreover, 
\begin{eqnarray*}
\int_{0}^{\infty}m(s)^{\frac{1}{4}}ds<\infty.
\end{eqnarray*}

$(\mathbf{H_{3}})$.  
The stationary process $\eta(t,u)$ is continuously differentiable with respect to the first variable and is twice continuously differentiable with respect to the second variable a.s. and satisfies 
$\|\tilde{\mathbb{E}}\partial_{t}^{k}\partial_{x}^{l}\eta(t,u)\|\leq M_{\eta}, k=0,1,l \in \mathbb{N}, 0\leq k+l\leq2$, $\tilde{\mathbb{E}}$ is the expectation with respect to the distribution of $u$.

Then we give the main result of the paper.
\begin{theorem}\label{equ:thm1}
Assume $(\mathbf{H_{1}})$-$(\mathbf{H_{3}})$ hold, for any $T>0$, the solution $x^{\epsilon}$ of equation (\ref{equ:main}) converges in distribution as $\epsilon\to0$ to $x(t)$ in space $C(0,T;\mathbb{R}^{d})$ with $x$ satisfying
\begin{eqnarray}
dx(t)=-\frac{1}{\alpha}\nabla V(x(s),\mu_{s})ds+\frac{\sqrt{\Sigma}}{\alpha\sqrt{\beta}}dB(t),
\end{eqnarray}
where $\Sigma=\mathbb{E}(\tilde{\mathbb{E}}\eta^{\epsilon}(t,x^{\epsilon}(t))\otimes\tilde{\mathbb{E}}\eta^{\epsilon}(t,x^{\epsilon}(t)))$, $\beta=-m'(0)\geq0$ and $B(t)$ is a $\mathbb{R}^{d}$-value standard Brownian motion.
\end{theorem}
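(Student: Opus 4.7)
The plan is to treat (\ref{equ:main}) as a McKean--Vlasov small-mass problem and combine an integrated form of the Newton equation with a martingale/diffusion-approximation argument in the spirit of Kurtz--Papanicolaou. Integrating the equation once in time and dividing by $\alpha$ yields
\begin{equation*}
x^{\epsilon}(t)=x_{0}+\frac{\epsilon}{\alpha}\bigl(y_{0}-\dot{x}^{\epsilon}(t)\bigr)-\frac{1}{\alpha}\int_{0}^{t}\nabla V(x^{\epsilon}(s),\mu^{\epsilon}_{s})\,ds+\frac{1}{\alpha\sqrt{\epsilon}}\int_{0}^{t}\bar{\eta}^{\epsilon}(s)\,ds,
\end{equation*}
where $\bar{\eta}^{\epsilon}(s)=\int_{\mathbb{R}^{d}}\eta^{\epsilon}(s,x)\mu^{\epsilon}_{s}(dx)$. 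This identity already exhibits the limiting mechanism: the first $\epsilon$-term vanishes, the Lipschitz drift converges to the McKean--Vlasov drift, and the singular integral of $\bar{\eta}^{\epsilon}$ must be shown to converge to a Brownian martingale with covariance $\Sigma/(\alpha^{2}\beta)$.

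For tightness of $\{x^{\epsilon}\}$ in $C(0,T;\mathbb{R}^{d})$, the only term requiring care is the singular integral, since $\nabla V$ is Lipschitz by $(\mathbf{H_{1}})$ and $\epsilon\dot{x}^{\epsilon}(t)$ can be controlled from Duhamel's formula applied to the first-order equation in $\dot{x}^{\epsilon}$. To tame the singular term I would introduce the auxiliary process
\begin{equation*}
\Phi^{\epsilon}(t)=\frac{1}{\sqrt{\epsilon}}\int_{t}^{\infty}\mathbb{E}\bigl[\bar{\eta}^{\epsilon}(s)\,\big|\,\mathcal{F}_{t}\bigr]\,ds,
\end{equation*}
so that $\epsilon^{-1/2}\int_{0}^{t}\bar{\eta}^{\epsilon}(s)\,ds=M^{\epsilon}(t)+\Phi^{\epsilon}(0)-\Phi^{\epsilon}(t)$ for a square-integrable martingale $M^{\epsilon}$. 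Assumption $(\mathbf{H_{2}})$ together with $(\mathbf{H_{3}})$ yields $\|\mathbb{E}[\bar{\eta}^{\epsilon}(s)|\mathcal{F}_{t}]\|\leq CM_{\eta}\,m((s-t)/\epsilon)$, so both $\Phi^{\epsilon}$ and the quadratic variation of $M^{\epsilon}$ admit uniform bounds via $\int_{0}^{\infty}m(r)\,dr=K$; combined with a Gronwall argument on $\mathbb{E}\sup_{s\leq t}\|x^{\epsilon}(s)\|^{2}$ using $(\mathbf{H_{1}})$, this gives $\sup_{\epsilon}\mathbb{E}\sup_{t\leq T}\|x^{\epsilon}(t)\|^{4}<\infty$ and a Hölder bound $\mathbb{E}\|x^{\epsilon}(t)-x^{\epsilon}(s)\|^{4}\leq C|t-s|^{2}$, whence Kolmogorov's criterion delivers tightness.

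For identification of the limit I would work with a martingale problem. For $\phi\in C_{b}^{2}(\mathbb{R}^{d})$ and a prescribed measure flow $\mu$, define
\begin{equation*}
\mathcal{L}_{\mu_{t}}\phi(x)=-\frac{1}{\alpha}\nabla V(x,\mu_{t})\cdot\nabla\phi(x)+\frac{1}{2\alpha^{2}\beta}\mathrm{tr}\bigl(\Sigma\,\nabla^{2}\phi(x)\bigr).
\end{equation*}
Following the perturbed test function technique, I would construct $\phi^{\epsilon}=\phi+\sqrt{\epsilon}\,\phi_{1}+\epsilon\,\phi_{2}$ with $\phi_{1},\phi_{2}$ chosen so that $\phi^{\epsilon}(x^{\epsilon}(t))-\int_{0}^{t}\mathcal{L}_{\mu^{\epsilon}_{s}}\phi(x^{\epsilon}(s))\,ds$ is a martingale up to an $o(1)$ error; the $\sqrt{\epsilon}$-correction cancels the leading-order singular term proportional to $\epsilon^{-1/2}\bar{\eta}^{\epsilon}\cdot\nabla\phi$, while $\phi_{2}$ generates the diffusion part whose covariance is extracted from $\int_{0}^{\infty}\mathbb{E}[\bar{\eta}^{\epsilon}(0)\otimes\bar{\eta}^{\epsilon}(r)]\,dr$ via $(\mathbf{H_{2}})$ and comes out to $\Sigma/(\alpha^{2}\beta)$ with $\beta=-m'(0)$. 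Passing $\epsilon\to 0$ along a subsequence, and using tightness of $\{\mu^{\epsilon}_{\cdot}\}$ in $\mathcal{P}_{2}$ together with the continuity of $\nabla V$, shows that every weak limit $x$ solves the martingale problem for $\mathcal{L}_{\mu_{t}}$ with $\mu_{t}=\mathcal{L}(x(t))$. Uniqueness of this distribution-dependent martingale problem follows from $(\mathbf{H_{1}})$ by a standard McKean--Vlasov contraction in $\mathcal{W}_{2}$, which upgrades subsequential convergence to full weak convergence.

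The main technical obstacle is the coupling between the fast noise and the unknown law $\mu^{\epsilon}_{t}$: because $\bar{\eta}^{\epsilon}$ is integrated against $\mu^{\epsilon}_{t}$ rather than a frozen measure, both $\Phi^{\epsilon}$ and the corrector $\phi_{1}$ carry $\mu^{\epsilon}_{t}$-dependent kernels, so the tightness estimate and the generator expansion both require a Wasserstein modulus of continuity for $t\mapsto\mu^{\epsilon}_{t}$ uniform in $\epsilon$. Hypothesis $(\mathbf{H_{3}})$, bounding the mixed derivatives $\partial_{t}^{k}\partial_{x}^{l}\tilde{\mathbb{E}}\eta$, is precisely what is needed to differentiate the averaged field with respect to the spatial argument and close these measure-flow estimates; without it, the linearisation $\nabla\bar{\eta}^{\epsilon}\cdot(x^{\epsilon}-x)$ that appears after freezing the law cannot be controlled.
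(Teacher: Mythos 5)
Your proposal is correct and follows essentially the same route as the paper: the integrated (Duhamel-type) identity for $x^{\epsilon}$, a mixing-based corrector turning the singular term $\epsilon^{-1/2}\int_{0}^{t}\bar{\eta}^{\epsilon}(s)\,ds$ into a martingale plus vanishing boundary terms for tightness, and a perturbed-test-function/martingale-problem argument for identification --- your correctors $\sqrt{\epsilon}\,\phi_{1}$ and $\epsilon\,\phi_{2}$ are exactly the paper's $Y_{1}^{\epsilon}$ and $Y_{2}^{\epsilon}$ built from Proposition \ref{proa1}, and the covariance $\Sigma/(\alpha^{2}\beta)$ is extracted the same way. The only substantive addition on your side is the explicit uniqueness argument for the limiting McKean--Vlasov martingale problem via $\mathcal{W}_{2}$-contraction, which the paper leaves implicit.
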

The following martingale result may be used frequently. Let $\mathcal{X}$ be the space of all $\mathcal{F}_{0}^{t}-$progressively measurable process $X$ such that
\begin{eqnarray*}
\sup_{0\leq t\leq T}\mathbb{E}|X(t)|<\infty.
\end{eqnarray*}
Let $\mathcal{N}=\{X\in\mathcal{X}:\sup_{0\leq t\leq T}\mathbb{E}|X(t)|=0\},$ then the quotient space $\mathcal{X}/ \mathcal{N}$ is a Banach space with norm
\begin{eqnarray*}
\|X\|=\sup_{0\leq t\leq T}\mathbb{E}|X(t)|.
\end{eqnarray*} 
Then we have following result
\begin{proposition}\label{proa1}~{\rm (\cite{EK})}
Let $Y\in\mathcal{X}$. If 
\begin{equation*}
\{s^{-1}\mathbb{E}[Y(t+s)-Y(t)\big|\mathcal{F}_{0}^{t}]:s>0, t\geq0\}
\end{equation*}
is uniformly integrable and in probability
\begin{equation*}
s^{-1}\mathbb{E}[Y(t+s)-Y(t)\big|\mathcal{F}_{0}^{t}]\to Z(t), as\,s\,\to0^{+},a.e. \quad t,
\end{equation*}
then 
\begin{equation*}
Y(t)-\int_{0}^{t}Z(s)ds
\end{equation*}
is an $\mathcal{F}_{0}^{t}$-martingale.
\end{proposition}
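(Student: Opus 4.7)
The plan is to verify the characterizing martingale identity $\mathbb{E}[\mathbf{1}_A(Y(t) - Y(u))] = \mathbb{E}\bigl[\mathbf{1}_A \int_u^t Z(s)\,ds\bigr]$ for every $0 \leq u \leq t \leq T$ and every $A \in \mathcal{F}_0^u$, by a Fubini-plus-telescoping trick: approximate $Y(t) - Y(u)$ by the time integral of a finite-difference quotient, push expectations through with Fubini, and pass to the limit in the step size using the uniform integrability hypothesis. As a preliminary, I would check that $M(t) := Y(t) - \int_0^t Z(s)\,ds$ is well-defined. Uniform integrability of the family $\{s^{-1}\mathbb{E}[Y(t+s)-Y(t)\mid\mathcal{F}_0^t]\}$ supplies a uniform $L^1$-bound, so the convergence in probability upgrades to $L^1$; this gives $Z \in \mathcal{X}$ with $\sup_t \mathbb{E}|Z(t)|<\infty$, hence $\int_0^t\mathbb{E}|Z(s)|\,ds<\infty$, and progressive measurability of $Z$ is inherited from that of the difference quotients.

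For the main identity, fix $u<t$, $A\in\mathcal{F}_0^u$ and $\epsilon>0$. Starting from the elementary telescoping formula
\begin{equation*}
\int_u^t \epsilon^{-1}\bigl(Y(s+\epsilon)-Y(s)\bigr)\,ds = \epsilon^{-1}\int_t^{t+\epsilon}Y(r)\,dr - \epsilon^{-1}\int_u^{u+\epsilon}Y(r)\,dr,
\end{equation*}
I multiply by $\mathbf{1}_A$ and take expectation; Fubini and the Lebesgue differentiation theorem applied to the locally integrable function $r\mapsto \mathbb{E}[\mathbf{1}_A Y(r)]$ show that the right-hand side tends to $\mathbb{E}[\mathbf{1}_A Y(t)]-\mathbb{E}[\mathbf{1}_A Y(u)]$ as $\epsilon\downarrow 0$, for a.e.\ $u,t$. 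On the left, Fubini combined with $A\in\mathcal{F}_0^s$ for $s\geq u$ lets me insert the conditional expectation:
\begin{equation*}
\mathbb{E}\!\left[\mathbf{1}_A \int_u^t \epsilon^{-1}\bigl(Y(s+\epsilon)-Y(s)\bigr)ds\right] = \int_u^t \mathbb{E}\!\left[\mathbf{1}_A\cdot\epsilon^{-1}\mathbb{E}[Y(s+\epsilon)-Y(s)\mid\mathcal{F}_0^s]\right]ds.
\end{equation*}
For a.e.\ $s$ the inner quantity converges in $L^1$ (convergence in probability plus uniform integrability) to $\mathbb{E}[\mathbf{1}_A Z(s)]$, and the UI-derived uniform $L^1$-majorant then permits dominated convergence on the $s$-integral, yielding $\mathbb{E}\bigl[\mathbf{1}_A\int_u^t Z(s)\,ds\bigr]$. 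Equating the two limits proves the desired identity.

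The principal obstacle is the double limit: interchanging $\epsilon\downarrow 0$ with the $s$-integration, and with the $\omega$-level (conditional) expectation. Both interchanges rest squarely on the uniform integrability hypothesis, which furnishes the required majorant; without it the difference quotients could concentrate mass and the inner limit would be degenerate. A secondary technical nuisance is extending the identity from Lebesgue-a.e.\ $(u,t)$ to every pair $u\leq t$; this is standard and can be dealt with either by passing to a right-continuous modification of $M$, or by noting that in the applications of interest $Y$ is already continuous in $L^1$, which upgrades the almost-everywhere identity to an everywhere identity and completes the verification of the $\mathcal{F}_0^t$-martingale property of $M$.
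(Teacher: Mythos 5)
The paper does not prove this proposition; it is quoted verbatim from Ethier and Kurtz \cite{EK}, so there is no internal argument to compare against. Your reconstruction is the standard proof of that result and is essentially correct: the telescoping identity, the insertion of the conditional expectation using $A\in\mathcal{F}_{0}^{u}\subseteq\mathcal{F}_{0}^{s}$ for $s\geq u$, and the two passages to the limit justified by uniform integrability (which upgrades convergence in probability to $L^{1}$ pointwise in $s$ and supplies the constant majorant $\sup_{s,\epsilon}\mathbb{E}|A_{\epsilon}(s)|<\infty$ for dominated convergence in $s$) are exactly the right ingredients. The one place that genuinely requires the caveat you flag is the Lebesgue-differentiation step: the exceptional null set of times depends on the event $A$, so passing from almost-every pair $(u,t)$ to every pair does need either a right-continuous modification or $L^{1}$-continuity of $Y$ in $t$; in the applications in this paper $Y^{\epsilon}$ is built from continuous processes, so this is harmless.
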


\begin{remark}
By the definition of expectation, the oscillating term in (\ref{equ:main}) $\frac{1}{\sqrt{\epsilon}}\int_{\mathbb{R}^{d}}\eta^{\epsilon}(t,x)\mu_{t}^{\epsilon}(dx)$ may be written as $\frac{1}{\sqrt{\epsilon}}\tilde{\mathbb{E}}\eta^{\epsilon}(t,x^{\epsilon}(t))$.
\end{remark}

\section{Tightness of solutions}\label{Tight}
  \setcounter{equation}{0}
  \renewcommand{\theequation}
{3.\arabic{equation}}
Next we give some bounded estimates on $x^{\epsilon}$ and $\dot{x}^{\epsilon}$. We rewrite (\ref{equ:main}) as
\begin{eqnarray}
\dot{x}^{\epsilon}(t)&=&y^{\epsilon}(t)\label{x-equ}\\
\epsilon\dot{y}^{\epsilon}(t)&=&-\alpha y^{\epsilon}(t)-\nabla V(x^{\epsilon}(t),\mu_{t}^{\epsilon})+\frac{1}{\sqrt{\epsilon}}\int_{\mathbb{R}^{d}}\eta^{\epsilon}(t,x)\mu_{t}^{\epsilon}(dx),\label{y-equ}
\end{eqnarray}
then by (\ref{y-equ})
\begin{eqnarray}\label{y-slu}
y^{\epsilon}(t)&=&y_{0}e^{-\frac{\alpha}{\epsilon}t}-\frac{1}{\epsilon}\int_{0}^{t}e^{-\frac{\alpha}{\epsilon}(t-s)}\nabla V(x^{\epsilon}(s),\mu_{s}^{\epsilon})ds\nonumber\\
&&+\frac{1}{\epsilon\sqrt{\epsilon}}\int_{0}^{t}e^{-\frac{\alpha}{\epsilon}(t-s)}\int_{\mathbb{R}^{d}}\eta^{\epsilon}(s,x)\mu_{s}^{\epsilon}(dx)ds,
\end{eqnarray}
and in combination with (\ref{x-equ}) and integration by parts,
\begin{eqnarray}\label{equ:1.10}
x^{\epsilon}(t)&=&x_{0}+\frac{\epsilon}{\alpha}y_{0}(1-e^{-\frac{\alpha}{\epsilon}t})-\frac{1}{\alpha}\int_{0}^{t}(1-e^{-\frac{\alpha}{\epsilon}(t-s)})\nabla V(x^{\epsilon}(s),\mu_{s}^{\epsilon})ds\nonumber\\
&&+\frac{1}{\alpha\sqrt{\epsilon}}\int_{0}^{t}(1-e^{-\frac{\alpha}{\epsilon}(t-s)})\int_{\mathbb{R}^{d}}\eta^{\epsilon}(s,x)\mu_{s}^{\epsilon}(dx)ds\nonumber\\
&\triangleq& I_{1}^{\epsilon}(t)+I_{2}^{\epsilon}(t).
\end{eqnarray}

Then we have the following result.
\begin{lemma}\label{MoEs}
Suppose that $(\mathbf{H_{1}})$-$(\mathbf{H_{3}})$ hold, then there exists $C_{T}>0$ such that 
\begin{eqnarray*}
\sup_{0\leq t\leq T}\mathbb{E}\|\sqrt{\epsilon}x^{\epsilon}(t)\|^{2}&\leq& C_{T},\quad\sup_{0\leq t\leq T}\mathbb{E}\|\sqrt{\epsilon}x^{\epsilon}(t)\|^{4}\leq C_{T}.\label{equ:b3.5}\\
\sup_{0\leq t\leq T}\mathbb{E}\|\sqrt{\epsilon}\dot{x}^{\epsilon}(t)\|^{2}&\leq& C_{T},\quad \sup_{0\leq t\leq T}\mathbb{E}\|\sqrt{\epsilon}\dot{x}^{\epsilon}(t)\|^{4}\leq C_{T}.\label{equ:b3.6}
\end{eqnarray*}
\end{lemma}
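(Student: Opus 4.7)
The plan is to derive both estimates directly from the variation-of-constants representations~(\ref{y-slu}) and~(\ref{equ:1.10}), multiplying each by $\sqrt{\epsilon}$ and then combining the Lipschitz assumption $(\mathbf{H_{1}})$ on $\nabla V$ with the uniform pointwise bound $\|\tilde{\mathbb{E}}\eta^{\epsilon}(s,u)\|\leq M_{\eta}$ from $(\mathbf{H_{3}})$, Cauchy--Schwarz against the exponential kernels, and a Gronwall step. Crucially, because the claim concerns moments of $\sqrt{\epsilon}x^{\epsilon}$ rather than of $x^{\epsilon}$ itself, the singular prefactor $1/\sqrt{\epsilon}$ in front of the oscillating integral in~(\ref{equ:1.10}) is absorbed by the $\sqrt{\epsilon}$ scaling; the martingale device announced in the introduction is therefore not needed at this stage and will enter only when sharper estimates on $x^{\epsilon}$ itself are required.

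For the first pair of inequalities I multiply~(\ref{equ:1.10}) by $\sqrt{\epsilon}$. The initial-data contributions $\sqrt{\epsilon}x_{0}$ and $(\epsilon^{3/2}/\alpha)y_{0}(1-e^{-\alpha t/\epsilon})$ are manifestly bounded. The oscillating term becomes
\begin{equation*}
\tfrac{1}{\alpha}\int_{0}^{t}(1-e^{-\alpha(t-s)/\epsilon})\tilde{\mathbb{E}}\eta^{\epsilon}(s,x^{\epsilon}(s))\,ds,
\end{equation*}
whose norm is bounded by $M_{\eta}T/\alpha$ thanks to $(\mathbf{H_{3}})$. For the potential term I combine $(\mathbf{H_{1}})$ with the identity $\mathcal{W}_{2}(\mu_{s}^{\epsilon},\delta_{0})^{2}=\mathbb{E}\|x^{\epsilon}(s)\|^{2}$ to obtain the pointwise growth estimate $\|\nabla V(x^{\epsilon}(s),\mu_{s}^{\epsilon})\|\leq C+L_{V}\|x^{\epsilon}(s)\|+L_{V}(\mathbb{E}\|x^{\epsilon}(s)\|^{2})^{1/2}$; squaring the resulting identity, taking expectations, and applying Cauchy--Schwarz in time produce an inequality of the form
\begin{equation*}
\mathbb{E}\|\sqrt{\epsilon}x^{\epsilon}(t)\|^{2}\leq C_{T}+C_{T}\int_{0}^{t}\mathbb{E}\|\sqrt{\epsilon}x^{\epsilon}(s)\|^{2}\,ds,
\end{equation*}
which Gronwall closes. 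The fourth-moment bound follows along the same route using the elementary inequality $(a_{1}+\cdots+a_{n})^{4}\leq n^{3}(a_{1}^{4}+\cdots+a_{n}^{4})$.

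For the second pair I multiply~(\ref{y-slu}) by $\sqrt{\epsilon}$. The initial-data term is $O(\sqrt{\epsilon})$, and the oscillating term becomes $\tfrac{1}{\epsilon}\int_{0}^{t}e^{-\alpha(t-s)/\epsilon}\tilde{\mathbb{E}}\eta^{\epsilon}(s,x^{\epsilon}(s))\,ds$, whose norm is bounded by $M_{\eta}/\alpha$ because $\tfrac{1}{\epsilon}\int_{0}^{t}e^{-\alpha(t-s)/\epsilon}\,ds\leq 1/\alpha$. The delicate term is the potential contribution $\tfrac{1}{\sqrt{\epsilon}}\int_{0}^{t}e^{-\alpha(t-s)/\epsilon}\nabla V(x^{\epsilon}(s),\mu_{s}^{\epsilon})\,ds$, for which Cauchy--Schwarz in time gives
\begin{equation*}
\mathbb{E}\Big\|\tfrac{1}{\sqrt{\epsilon}}\int_{0}^{t}e^{-\alpha(t-s)/\epsilon}\nabla V(x^{\epsilon}(s),\mu_{s}^{\epsilon})\,ds\Big\|^{2}\leq\tfrac{1}{\alpha}\int_{0}^{t}e^{-\alpha(t-s)/\epsilon}\mathbb{E}\|\nabla V(x^{\epsilon}(s),\mu_{s}^{\epsilon})\|^{2}\,ds;
\end{equation*}
substituting $\mathbb{E}\|x^{\epsilon}(s)\|^{2}=\epsilon^{-1}\mathbb{E}\|\sqrt{\epsilon}x^{\epsilon}(s)\|^{2}\leq C_{T}/\epsilon$ from the previous step and invoking once more $\tfrac{1}{\epsilon}\int_{0}^{t}e^{-\alpha(t-s)/\epsilon}\,ds\leq 1/\alpha$ renders this bound uniform in $\epsilon$. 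The fourth-moment estimate is identical in structure.

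The main obstacle is precisely this last step: the $1/\sqrt{\epsilon}$ in front of the potential contribution to $\sqrt{\epsilon}\dot{x}^{\epsilon}$ is not cancelled by the $\sqrt{\epsilon}$ scaling, so it must be matched against one factor of $\epsilon$ harvested from the exponential kernel and against the a priori bound $\epsilon\mathbb{E}\|x^{\epsilon}(s)\|^{2}=O(1)$ just established. This dictates the order of the argument: the estimates on $\sqrt{\epsilon}x^{\epsilon}$ must be in hand before those on $\sqrt{\epsilon}\dot{x}^{\epsilon}$ can be closed.
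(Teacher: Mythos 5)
Your argument is correct and follows essentially the same route as the paper: bound $\mathbb{E}\|\sqrt{\epsilon}x^{\epsilon}\|^{2}$ and $\mathbb{E}\|\sqrt{\epsilon}x^{\epsilon}\|^{4}$ first from (\ref{equ:1.10}) using $(\mathbf{H_{1}})$, the bound $\|\tilde{\mathbb{E}}\eta^{\epsilon}\|\leq M_{\eta}$ and Gronwall, and only then close the $\sqrt{\epsilon}\dot{x}^{\epsilon}$ estimates, feeding in $\mathbb{E}\|x^{\epsilon}(s)\|^{2}=\epsilon^{-1}\mathbb{E}\|\sqrt{\epsilon}x^{\epsilon}(s)\|^{2}$. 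The only (immaterial) difference is that for $\sqrt{\epsilon}\dot{x}^{\epsilon}$ you work with the Duhamel representation (\ref{y-slu}) and absorb the $1/\sqrt{\epsilon}$ singularity into the $O(\epsilon)$ mass of the kernel $e^{-\alpha(t-s)/\epsilon}$, whereas the paper differentiates $\|\sqrt{\epsilon}\dot{x}^{\epsilon}(t)\|^{2}$ and lets the damping term $-\frac{\alpha}{2\epsilon}\|\sqrt{\epsilon}\dot{x}^{\epsilon}\|^{2}$ do the same job.
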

\begin{proof}
By (\ref{equ:1.10}), $(\mathbf{H_{1}})$, $(\mathbf{H_{3}})$ and  the H${\rm\ddot{o}}$lder inequality, we have
\begin{eqnarray*}
\mathbb{E}\|\sqrt{\epsilon}x^{\epsilon}(t)\|^{2}&\leq&
8\|x_{0}\|^{2}+8\Big(\frac{\epsilon}{\alpha}\Big)^{2}\|y_{0}\|^{2}+\frac{16TL_{V}^{2}}{\alpha^{2}}\int_{0}^{t}(\mathbb{E}\|\sqrt{\epsilon}x^{\epsilon}(u)\|^{2}+\epsilon\mathcal{W}_{2}(\mu_{u}^{\epsilon},\delta_{0})^{2})du\nonumber\\
&&+\frac{16T^{2}\|\nabla V(0,\delta_{0})\|^{2}}{\alpha^{2}}+\frac{8T}{\alpha^{2}}\int_{0}^{t}\mathbb{E}\|\tilde{\mathbb{E}}\eta^{\epsilon}(s,x^{\epsilon}(s))\|^{2}ds\nonumber\\
&\leq&8\|x_{0}\|^{2}+8\Big(\frac{\epsilon}{\alpha}\Big)^{2}\|y_{0}\|^{2}+\frac{32TL_{V}}{\alpha}^{2}\int_{0}^{t}\mathbb{E}\|\sqrt{\epsilon}x^{\epsilon}(s)\|^{2}ds\nonumber\\
&&+\frac{16T^{2}\|\nabla V(0,\delta_{0})\|^{2}}{\alpha^{2}}+\frac{8T}{\alpha^{2}}\int_{0}^{t}\mathbb{E}\|\tilde{\mathbb{E}}\eta^{\epsilon}(s,x^{\epsilon}(s))\|^{2}ds,
\end{eqnarray*}
by Gronwall's inequality,
\begin{eqnarray}\label{Squ}
\sup_{0\leq t\leq T}\mathbb{E}\|\sqrt{\epsilon}x^{\epsilon}(t)\|^{2}&\leq& C_{T}.
\end{eqnarray}
Furthermore
\begin{eqnarray*}
\mathbb{E}\|\sqrt{\epsilon}x^{\epsilon}(t)\|^{4}&\leq& 64\|x_{0}\|^{4}+64\Big(\frac{\epsilon}{\alpha}\Big)^{4}\|y_{0}\|^{4}+\frac{64}{\alpha^{4}}\mathbb{E}\Big(\sqrt{\epsilon}\int_{0}^{t}(1-e^{-\frac{\alpha}{\epsilon}(t-u)})\|\nabla V(x^{\epsilon}(u),\mu_{u}^{\epsilon})\|du\Big)^{4}\nonumber\\
&&+64\Big(\frac{1}{\alpha}\Big)^{4}\mathbb{E}\Big(\int_{0}^{t}(1-e^{-\frac{\alpha}{\epsilon}(t-u)})\|\tilde{\mathbb{E}}\eta^{\epsilon}(u,x^{\epsilon}(u))\|du\Big)^{4}\nonumber\\
&\leq&64\|x_{0}\|^{4}+64\Big(\frac{\epsilon}{\alpha}\Big)^{4}\|y_{0}\|^{4}+\frac{64}{\alpha^{4}}\mathbb{E}\Big(L_{V}\int_{0}^{t}\|\sqrt{\epsilon}x^{\epsilon}(u)\|du\nonumber\\
&&+L_{V}\int_{0}^{t}(\mathbb{E}\|\sqrt{\epsilon}x^{\epsilon}(u)\|^{2})^{\frac{1}{2}}du+\sqrt{\epsilon}T\|\nabla V(0,\delta_{0})\|\Big)^{4}+64\Big(\frac{1}{\alpha}\Big)^{4}M_{\eta}^{4}T^{4}\nonumber\\
&\leq&64\|x_{0}\|^{4}+64\Big(\frac{\epsilon}{\alpha}\Big)^{4}\|y_{0}\|^{4}+\frac{12^{3}L_{V}^{4}T^{3}}{\alpha^{4}}\int_{0}^{t}\mathbb{E}\|\sqrt{\epsilon}x^{\epsilon}(u)\|^{4}du\nonumber\\
&&+\frac{12^{3}\epsilon^{2}}{\alpha^{4}}\|\nabla V(0,\delta_{0})\|^{4}T^{4}+64\Big(\frac{1}{\alpha}\Big)^{4}M_{\eta}^{4}T^{4},
\end{eqnarray*}
the Gronwall inequality yields
\begin{eqnarray}\label{equ:3.80}
\sup_{0\leq t\leq T}\mathbb{E}\|\sqrt{\epsilon}x^{\epsilon}(t)\|^{4}\leq C_{T}.
\end{eqnarray}
By chain rules and the Cauchy-Schwartz inequality,
\begin{eqnarray*}
\frac{1}{2}\frac{d}{dt}\|\sqrt{\epsilon}\dot{x}^{\epsilon}(t)\|^{2}&=&-\frac{\alpha}{\epsilon}\langle\sqrt{\epsilon}\dot{x}^{\epsilon}(t),\sqrt{\epsilon}\dot{x}^{\epsilon}(t)\rangle-\frac{1}{\sqrt{\epsilon}}\langle\sqrt{\epsilon}\dot{x}^{\epsilon}(t),\nabla V(x^{\epsilon}(t),\mu_{t}^{\epsilon})\rangle\nonumber\\
&&+\frac{1}{\epsilon}\langle\sqrt{\epsilon}\dot{x}^{\epsilon}(t),\tilde{\mathbb{E}}\eta^{\epsilon}(t,x^{\epsilon}(t))\rangle\\
&\leq& -\frac{\alpha}{2\epsilon}\|\sqrt{\epsilon}\dot{x}^{\epsilon}(t)\|^{2}+\frac{2}{\alpha}L_{V}^{2}\|x^{\epsilon}(t)\|^{2}+\frac{2}{\alpha}L_{V}^{2}\mathbb{E}\|x^{\epsilon}(t)\|^{2}\\
&&+\frac{2}{\alpha}\|\nabla V(0,\delta_{0})\|^{2}+\frac{1}{\alpha\epsilon}\|\tilde{\mathbb{E}}\eta^{\epsilon}(t,x^{\epsilon}(t))\|^{2},
\end{eqnarray*}
then taking expectation on both sides and by ($\mathbf{H}_{3}$), (\ref{Squ}) and the Gronwall inequality,
\begin{eqnarray*}
\sup_{0\leq t\leq T}\mathbb{E}\|\sqrt{\epsilon}\dot{x}^{\epsilon}(t)\|^{2}\leq C_{T}.
\end{eqnarray*}
Similarly,
\begin{eqnarray*}
\frac{1}{2}\frac{d}{dt}\|\sqrt{\epsilon}\dot{x}^{\epsilon}(t)\|^{4}&\leq& \|\sqrt{\epsilon}\dot{x}^{\epsilon}(t)\|^{2}\Big(-\frac{\alpha}{2\epsilon}\|\sqrt{\epsilon}\dot{x}^{\epsilon}(t)\|^{2}+\frac{2}{\alpha}L_{V}^{2}\|\sqrt{\epsilon}x^{\epsilon}(t)\|^{2}\nonumber\\
&&+\frac{2}{\alpha}L_{V}^{2}\mathbb{E}\|\sqrt{\epsilon}x^{\epsilon}(t)\|^{2}
+\frac{2}{\alpha}\|\nabla_{x}V(0,\delta_{0})\|^{2}+\frac{1}{\alpha\epsilon}\|\tilde{\mathbb{E}}\eta^{\epsilon}(t,x^{\epsilon}(t))\|^{2}\Big)\nonumber\\
&\leq&-\frac{\alpha}{4\epsilon}\|\sqrt{\epsilon}\dot{x}^{\epsilon}(t)\|^{4}+\frac{16L_{V}^{4}}{\alpha^{3}}\|\sqrt{\epsilon}x^{\epsilon}(t)\|^{4}+\frac{16L_{V}^{4}}{\alpha^{3}}\mathbb{E}\|\sqrt{\epsilon}x^{\epsilon}(t)\|^{4}\nonumber\\
&&+\frac{8}{\alpha^{3}}\|\nabla V(0,\delta_{0})\|^{4}+\frac{1}{\alpha^{3}\epsilon}\|\tilde{\mathbb{E}}\eta^{\epsilon}(t,x^{\epsilon}(t))\|^{4},
\end{eqnarray*}
Gronwall's inequality and (\ref{equ:3.80}) yields
\begin{eqnarray}\label{equ:1.32}
\sup_{0\leq t\leq T}\mathbb{E}\|\sqrt{\epsilon}\dot{x}^{\epsilon}(t)\|^{4}\leq C_{T}.
\end{eqnarray}
\end{proof}

Next we are devoted to derive the tightness of the solution $x^{\epsilon}(t)$. For this, we firstly give some uniformly bounded estimates to $I_{2}^{\epsilon}(t)$ in (\ref{equ:1.10}) with respect to $\epsilon$. Note that
\begin{eqnarray}\label{lee}
I_{2}^{\epsilon}(t)&=&\frac{1}{\alpha\sqrt{\epsilon}}\int_{0}^{t}\int_{\mathbb{R}^{d}}\eta^{\epsilon}(s,x)\mu_{s}^{\epsilon}(dx)ds\nonumber\\
&&-\frac{1}{\alpha\sqrt{\epsilon}}\int_{0}^{t}\int_{\mathbb{R}^{d}}e^{-\frac{\alpha}{\epsilon}(t-s)}\eta^{\epsilon}(s,x)\mu_{s}^{\epsilon}(dx)ds.
\end{eqnarray}
Define 
\begin{eqnarray}\label{lef}
u^{\epsilon}(t)=\frac{1}{\alpha\sqrt{\epsilon}}\int_{0}^{t}\int_{\mathbb{R}^{d}}\eta^{\epsilon}(s,x)\mu_{s}^{\epsilon}(dx)ds,
\end{eqnarray}
then
\begin{eqnarray}
\dot{u}^{\epsilon}(t)&=&\frac{1}{\alpha\sqrt{\epsilon}}\int_{\mathbb{R}^{d}}\eta^{\epsilon}(s,x)\mu_{t}^{\epsilon}(dx),\nonumber\\
&=&\frac{1}{\alpha\sqrt{\epsilon}}\tilde{\mathbb{E}}\eta^{\epsilon}(t,x^{\epsilon}(t)), \quad u^{\epsilon}(0)=0.
\end{eqnarray}
In order to obtain uniformly bounded estimates in $\epsilon$, we construct a martingale to deal with the singular term $\frac{1}{\alpha\sqrt{\epsilon}}\tilde{\mathbb{E}}\eta^{\epsilon}(t,x^{\epsilon}(t))$. By chain rules and the Cauchy-Schwart inequality, we have
\begin{eqnarray*}
\frac{1}{2}\frac{d}{dt}\mathbb{E}\|u^{\epsilon}(t)\|^{2}
&\leq&\frac{1}{\alpha\sqrt{\epsilon}}\mathbb{E}\Big\|\int_{\mathbb{R}^{d}}\eta^{\epsilon}(t,x)\mu_{t}^{\epsilon}(dx)\Big\|^{2}+\frac{1}{\alpha\sqrt{\epsilon}}\mathbb{E}\|u^{\epsilon}(t)\|^{2},
\end{eqnarray*}
by Gronwall's inequality and ($\mathbf{H}_{3}$),
\begin{eqnarray}\label{equ:1.19}
\mathbb{E}\|u^{\epsilon}(t)\|^{2}\leq \frac{1}{\alpha\sqrt{\epsilon}}\int_{0}^{t}e^{\frac{1}{\alpha\sqrt{\epsilon}}(t-s)}\mathbb{E}\Big\|\int_{\mathbb{R}^{d}}\eta^{\epsilon}(s,x)\mu_{s}^{\epsilon}(dx)\Big\|^{2}ds\leq C_{\epsilon}.
\end{eqnarray}
In order to derive the uniform estimates to $\epsilon$, we need to deal with the singular term $\frac{1}{\sqrt{\epsilon}}\langle\int_{\mathbb{R}^{d}}\eta^{\epsilon}(t,x)\mu_{t}^{\epsilon}(dx),u^{\epsilon}(t)\rangle$.
Let 
\begin{eqnarray*}
Y_{1}^{\epsilon}(t)&=&\frac{1}{\sqrt{\epsilon}}\int_{t}^{\infty}\mathbb{E}\Big[\Big\langle\int_{\mathbb{R}^{d}}\eta^{\epsilon}(s,x)\mu_{t}^{\epsilon}(dx),u^{\epsilon}(t)\Big\rangle\Big|\mathcal{F}_{0}^{\frac{t}{\epsilon}}\Big]ds\nonumber\\
&=&\Big\langle\int_{\mathbb{R}^{d}}\eta^{\epsilon}(t,x)\mu_{t}^{\epsilon}(dx),u^{\epsilon}(t)\Big\rangle\frac{1}{\sqrt{\epsilon}}\int_{t}^{\infty}m\Big(\frac{s-t}{\epsilon}\Big)ds\nonumber\\
&=&\sqrt{\epsilon}K\langle\tilde{\mathbb{E}}\eta^{\epsilon}(t,x^{\epsilon}(t)),u^{\epsilon}(t)\rangle,
\end{eqnarray*}
and 
\begin{eqnarray*}
&&Z_{1}^{\epsilon}(t)=\lim_{\delta\to0}\frac{\mathbb{E}[Y_{1}^{\epsilon}(t+\delta)-Y_{1}^{\epsilon}(t)|\mathcal{F}_{0}^{\frac{t}{\epsilon}}]}{\delta}\nonumber\\
&=&\sqrt{\epsilon}K\lim_{\delta\to0}\frac{\mathbb{E}[\langle\tilde{\mathbb{E}}\eta^{\epsilon}(t+\delta,x^{\epsilon}(t+\delta)),u^{\epsilon}(t+\delta)\rangle-\langle\tilde{\mathbb{E}}\eta^{\epsilon}(t+\delta,x^{\epsilon}(t)),u^{\epsilon}(t+\delta)\rangle|\mathcal{F}_{0}^{\frac{t}{\epsilon}}]}{\delta}\nonumber\\
&&+\sqrt{\epsilon}K\lim_{\delta\to0}\frac{\mathbb{E}[\langle\tilde{\mathbb{E}}\eta^{\epsilon}(t+\delta,x^{\epsilon}(t)),u^{\epsilon}(t+\delta)\rangle-\langle\tilde{\mathbb{E}}\eta^{\epsilon}(t+\delta,x^{\epsilon}(t)),u^{\epsilon}(t)\rangle|\mathcal{F}_{0}^{\frac{t}{\epsilon}}]}{\delta}\nonumber\\
&&+\sqrt{\epsilon}K\lim_{\delta\to0}\frac{\mathbb{E}[\langle\tilde{\mathbb{E}}\eta^{\epsilon}(t+\delta,x^{\epsilon}(t)),u^{\epsilon}(t)\rangle-\langle\tilde{\mathbb{E}}\eta^{\epsilon}(t,x^{\epsilon}(t)),u^{\epsilon}(t)\rangle|\mathcal{F}_{0}^{\frac{t}{\epsilon}}]}{\delta}\nonumber\\
&=&\sqrt{\epsilon}K\mathbb{E}\Big[\lim_{\delta\to0}\frac{\langle\tilde{\mathbb{E}}\nabla_{x}\eta^{\epsilon}(t+\delta,x^{\epsilon}(t)+\theta(x^{\epsilon}(t+\delta)-x^{\epsilon}(t)))\cdot(x^{\epsilon}(t+\delta)-x^{\epsilon}(t)),u^{\epsilon}(t+\delta)\rangle}{\delta}\Big|\mathcal{F}_{0}^{\frac{t}{\epsilon}}\Big]\nonumber\\
&&+\sqrt{\epsilon}K\mathbb{E}\Big[\lim_{\delta\to0}\frac{\langle\tilde{\mathbb{E}}\eta^{\epsilon}(t+\delta,x^{\epsilon}(t)),u^{\epsilon}(t+\delta)-u^{\epsilon}(t)\rangle}{\delta}\Big|\mathcal{F}_{0}^{\frac{t}{\epsilon}}\Big]\nonumber\\
&&+\sqrt{\epsilon}K\mathbb{E}\Big[\lim_{\delta\to0}\frac{\langle\tilde{\mathbb{E}}\eta^{\epsilon}(t+\delta,x^{\epsilon}(t))-\tilde{\mathbb{E}}\eta^{\epsilon}(t,x^{\epsilon}(t)),u^{\epsilon}(t)\rangle}{\delta}\Big|\mathcal{F}_{0}^{\frac{t}{\epsilon}}\Big]\nonumber\\
&=&\sqrt{\epsilon}K\langle\tilde{\mathbb{E}}\nabla_{x}\eta^{\epsilon}(t,x^{\epsilon}(t))\cdot\dot{x}^{\epsilon}(t),u^{\epsilon}(t)\rangle+\frac{K}{\alpha}\|\tilde{\mathbb{E}}\eta^{\epsilon}(t,x^{\epsilon}(t))\|^{2}\nonumber\\
&&-\frac{K\beta}{\sqrt{\epsilon}}\langle\tilde{\mathbb{E}}\eta^{\epsilon}(t,x^{\epsilon}(t)),u^{\epsilon}(t)\rangle,\quad \beta\triangleq-m'(0)\geq0.
\end{eqnarray*}
Then by Proposition \ref{proa1}, we have
\begin{eqnarray}
&&Y_{1}^{\epsilon}(t)-\int_{0}^{t}Z_{1}^{\epsilon}(s)ds=\sqrt{\epsilon}K\langle\tilde{\mathbb{E}}\eta^{\epsilon}(t,x^{\epsilon}(t)),u^{\epsilon}(t)\rangle\nonumber\\
&&-\sqrt{\epsilon}K\int_{0}^{t}\langle\tilde{\mathbb{E}}\nabla_{x}\eta^{\epsilon}(s,x^{\epsilon}(s))\cdot\dot{x}^{\epsilon}(s),u^{\epsilon}(s)\rangle ds\nonumber\\
&&-\frac{K}{\alpha}\int_{0}^{t}\|\tilde{\mathbb{E}}\eta^{\epsilon}(s,x^{\epsilon}(s))\|^{2}ds+\frac{K\beta}{\sqrt{\epsilon}}\int_{0}^{t}\langle\tilde{\mathbb{E}}\eta^{\epsilon}(s,x^{\epsilon}(s)),u^{\epsilon}(s)\rangle ds
\end{eqnarray}
is a $\mathcal{F}_{0}^{\frac{t}{\epsilon}}$-martingale. Now we define a process
\begin{eqnarray}
M_{1}^{\epsilon}(t)&=&\sqrt{\epsilon}K\langle\tilde{\mathbb{E}}\eta^{\epsilon}(t,x^{\epsilon}(t)),u^{\epsilon}(t)\rangle-\sqrt{\epsilon}K\int_{0}^{t}\langle\tilde{\mathbb{E}}\nabla_{x}\eta^{\epsilon}(s,x^{\epsilon}(s))\cdot\dot{x}^{\epsilon}(s),u^{\epsilon}(s)\rangle ds\nonumber\\
&&-\frac{K}{\alpha}\int_{0}^{t}\|\tilde{\mathbb{E}}\eta^{\epsilon}(s,x^{\epsilon}(s))\|^{2}ds+\frac{K\beta}{\sqrt{\epsilon}}\int_{0}^{t}\langle\tilde{\mathbb{E}}\eta^{\epsilon}(s,x^{\epsilon}(s)),u^{\epsilon}(s)\rangle ds,\nonumber
\end{eqnarray}
then $M_{1}^{\epsilon}(t)$ is a $\mathcal{F}_{0}^{\frac{t}{\epsilon}}$-martingale and $\mathbb{E}M_{1}^{\epsilon}(t)=0$. For any fixed $\epsilon$, by (\ref{equ:1.19}) and $(\mathbf{H_{3}})$, $\{M_{1}^{\epsilon}(t)\}$ is a square integrable martingale. 
Note that
\begin{eqnarray*}
dM_{1}^{\epsilon}(t)&=&d[\sqrt{\epsilon}K\langle\tilde{\mathbb{E}}\eta^{\epsilon}(t,x^{\epsilon}(t)),u^{\epsilon}(t)\rangle]-\frac{K}{\alpha}\|\tilde{\mathbb{E}}\eta^{\epsilon}(t,x^{\epsilon}(t))\|^{2}dt\nonumber\\
&&+\frac{K\beta}{\sqrt{\epsilon}}\langle\tilde{\mathbb{E}}\eta^{\epsilon}(t,x^{\epsilon}(t)),u^{\epsilon}(t)\rangle dt-\sqrt{\epsilon}K\langle\tilde{\mathbb{E}}\nabla_{x}\eta^{\epsilon}(t,x^{\epsilon}(t))\cdot\dot{x}^{\epsilon}(t),u^{\epsilon}(t)\rangle dt,
\end{eqnarray*}
then
\begin{eqnarray}\label{equ:3.150}
&&\frac{K\beta}{2}d\|u^{\epsilon}(t)\|^{2}+d[\sqrt{\epsilon}K\langle\tilde{\mathbb{E}}\eta^{\epsilon}(t,x^{\epsilon}(t)),u^{\epsilon}(t)\rangle]=dM_{1}^{\epsilon}(t)+\frac{K}{\alpha}\|\tilde{\mathbb{E}}\eta^{\epsilon}(t,x^{\epsilon}(t))\|^{2}dt\nonumber\\
&&\quad\quad\quad\quad\quad\quad+\sqrt{\epsilon}K\langle\tilde{\mathbb{E}}\nabla_{x}\eta^{\epsilon}(t,x^{\epsilon}(t))\cdot\dot{x}^{\epsilon}(t),u^{\epsilon}(t)\rangle dt,
\end{eqnarray}
integrating from $0$ to $t$, taking expectation and by Young's inequality, we have 
\begin{eqnarray*}
\frac{K\beta}{4}\mathbb{E}\|u^{\epsilon}(t)\|^{2}&\leq& \frac{\epsilon}{2\beta}\mathbb{E}\|\tilde{\mathbb{E}}\eta^{\epsilon}(t,x^{\epsilon}(t))\|^{2}+\frac{K}{\alpha}\mathbb{E}\|\tilde{\mathbb{E}}\eta^{\epsilon}(t,x^{\epsilon}(t))\|^{2}\nonumber\\
&&+\frac{K}{\beta}\mathbb{E}\|\sqrt{\epsilon}\dot{x}^{\epsilon}(t)\|^{4}+\frac{K}{\beta}\mathbb{E}\|\tilde{\mathbb{E}}\nabla_{x}\eta^{\epsilon}(t,x^{\epsilon}(t))\|^{4},
\end{eqnarray*}
this together with ($\mathbf{H}_{3}$) and Lemma \ref{MoEs} yield
\begin{eqnarray}\label{equ:1.37}
\sup_{0\leq t\leq T}\mathbb{E}\|u^{\epsilon}(t)\|^{2}\leq C_{T}.
\end{eqnarray} 
Furthermore, we need some estimates on $\|u^{\epsilon}(t)\|^{4}$. By chain rules and (\ref{equ:3.150}),
\begin{eqnarray*}
&&d\Big[\frac{K\beta}{4}\|u^{\epsilon}(t)\|^{4}+\sqrt{\epsilon}K\|u^{\epsilon}(t)\|^{2}\langle u^{\epsilon}(t),\tilde{\mathbb{E}}\eta^{\epsilon}(t,x^{\epsilon}(t))\rangle\Big]\nonumber\\
&&=\|u^{\epsilon}(t)\|^{2}d\Big[\frac{K\beta}{2}\|u^{\epsilon}(t)\|^{2}+\sqrt{\epsilon}K\langle u^{\epsilon}(t),\tilde{\mathbb{E}}\eta^{\epsilon}(t,x^{\epsilon}(t))\rangle\Big]\nonumber\\
&&\quad+\sqrt{\epsilon}K\langle u^{\epsilon}(t),\tilde{\mathbb{E}}\eta^{\epsilon}(t,x^{\epsilon}(t))\rangle d\|u^{\epsilon}(t)\|^{2}\nonumber\\
&&=\|u^{\epsilon}(t)\|^{2}dM_{1}^{\epsilon}(t)+\frac{K}{\alpha}\|u^{\epsilon}(t)\|^{2}\|\tilde{\mathbb{E}}\eta^{\epsilon}(t,x^{\epsilon}(t))\|^{2}dt\nonumber\\
&&\quad+\sqrt{\epsilon}K\|u^{\epsilon}(t)\|^{2}\langle\tilde{\mathbb{E}}\nabla_{x}\eta^{\epsilon}(t,x^{\epsilon}(t))\cdot\dot{x}^{\epsilon}(t),u^{\epsilon}(t)\rangle dt\nonumber\\
&&\quad+2K|\langle u^{\epsilon}(t),\tilde{\mathbb{E}}\eta^{\epsilon}(t,x^{\epsilon}(t))\rangle|^{2}\nonumber\\
&&\leq \|u^{\epsilon}(t)\|^{2}dM_{1}^{\epsilon}(t)+\frac{3K}{\alpha}\|u^{\epsilon}(t)\|^{2}\|\tilde{\mathbb{E}}\eta^{\epsilon}(t,x^{\epsilon}(t))\|^{2}dt\nonumber\\
&&\quad+K\|u^{\epsilon}(t)\|^{3}\|\tilde{\mathbb{E}}\nabla_{x}\eta^{\epsilon}(t,x^{\epsilon}(t))\cdot\sqrt{\epsilon}\dot{x}^{\epsilon}(t)\|dt.
\end{eqnarray*}
And by (\ref{equ:1.37}) and $M_{1}^{\epsilon}(t)$ is a square integrable martingale, we derive that $\int_{0}^{t}\|u^{\epsilon}(s)\|^{2}dM_{1}^{\epsilon}(s)$ is also a square integrable martingale, then
\begin{eqnarray*}
&&d\Big[\frac{K\beta}{4}\mathbb{E}\|u^{\epsilon}(t)\|^{4}+\sqrt{\epsilon}K\mathbb{E}\|u^{\epsilon}(t)\|^{2}\langle u^{\epsilon}(t),\tilde{\mathbb{E}}\eta^{\epsilon}(t,x^{\epsilon}(t))\rangle\Big]\nonumber\\
&&\leq \frac{3K}{2\alpha}\mathbb{E}\|u^{\epsilon}(t)\|^{4}dt+\frac{3K}{2\alpha}\mathbb{E}\|\tilde{\mathbb{E}}\eta^{\epsilon}(t,x^{\epsilon}(t))\|^{4}dt+\frac{3\alpha}{4}\mathbb{E}\|u^{\epsilon}(t)\|^{4}\nonumber\\
&&\quad+\frac{K}{4}\mathbb{E}\|\tilde{\mathbb{E}}\nabla_{x}\eta^{\epsilon}(t,x^{\epsilon}(t))\cdot\sqrt{\epsilon}\dot{x}^{\epsilon}(t)\|^{4}dt\nonumber\\
&&\leq\Big(\frac{3K}{2\alpha}+\frac{3\alpha}{4}\Big)\mathbb{E}\|u^{\epsilon}(t)\|^{4}dt+\frac{3K}{2\alpha}\mathbb{E}\|\tilde{\mathbb{E}}\eta^{\epsilon}(t,x^{\epsilon}(t))\|^{4}dt+\frac{M_{\eta}K}{4}\mathbb{E}\|\sqrt{\epsilon}\dot{x}^{\epsilon}(t)\|^{4},
\end{eqnarray*}
integrating from $0$ to $t$ and taking expectation, together with (\ref{equ:1.32}) and ($\mathbf{H}_{3}$), we have
\begin{eqnarray*}
\frac{K\beta}{8}\mathbb{E}\|u^{\epsilon}(t)\|^{4}&\leq& \Big(\frac{3K}{2\alpha}+\frac{3\alpha}{4}\Big)\int_{0}^{t}\mathbb{E}\|u^{\epsilon}(s)\|^{4}ds+\frac{3K}{2\alpha}\int_{0}^{t}\mathbb{E}\|\tilde{\mathbb{E}}\eta^{\epsilon}(s,x^{\epsilon}(s))\|^{4}ds\nonumber\\
&&+\frac{CK}{4}\int_{0}^{t}\mathbb{E}\|\sqrt{\epsilon}\dot{x}^{\epsilon}(s)\|^{4}ds,
\end{eqnarray*}
Gronwall's inequality yields
\begin{eqnarray}\label{equ:b4}
\sup_{0\leq t\leq T}\mathbb{E}\|u^{\epsilon}(t)\|^{4}\leq C_{T}.
\end{eqnarray}
Now let 
\begin{eqnarray}
\rho^{\epsilon}(t)=\frac{K\beta}{2}\|u^{\epsilon}(t)\|^{2}+\sqrt{\epsilon}K\langle u^{\epsilon}(t),\tilde{\mathbb{E}}\eta^{\epsilon}(t,x^{\epsilon}(t))\rangle,\nonumber
\end{eqnarray}
then by It${\rm \hat{o}}$'s formula and (\ref{equ:3.150}), we have
\begin{eqnarray}
d(\rho^{\epsilon}(t))^{2}&=&2\rho^{\epsilon}(t)d\rho^{\epsilon}(t)+d\langle M_{1}^{\epsilon},M_{1}^{\epsilon}\rangle(t)\nonumber\\
&=&2\rho^{\epsilon}(t)dM_{1}^{\epsilon}(t)+2\rho^{\epsilon}(t)\cdot\frac{K}{\alpha}\|\tilde{\mathbb{E}}\eta^{\epsilon}(t,x^{\epsilon}(t))\|^{2}\nonumber\\
&+&2K\rho^{\epsilon}(t)\langle u^{\epsilon}(t),\tilde{\mathbb{E}}\nabla_{x}\eta^{\epsilon}(t,x^{\epsilon}(t))\cdot\sqrt{\epsilon}\dot{x}^{\epsilon}(t)\rangle dt+d\langle M_{1}^{\epsilon},M_{1}^{\epsilon}\rangle(t)\nonumber,
\end{eqnarray}
where $\langle M_{1}^{\epsilon},M_{1}^{\epsilon}\rangle(t)\triangleq\langle M_{1}^{\epsilon}\rangle(t)$ is the square variation of $M_{1}^{\epsilon}(t)$.
Then
\begin{eqnarray}\label{equ:3.180}
\langle M_{1}^{\epsilon}\rangle(t)&=&(\rho^{\epsilon}(t))^{2}-2\int_{0}^{t}\rho^{\epsilon}(s)dM_{1}^{\epsilon}(s)-\frac{2K}{\alpha}\int_{0}^{t}\rho^{\epsilon}(s)\|\tilde{\mathbb{E}}\eta^{\epsilon}(s,x^{\epsilon}(s))\|^{2}ds\nonumber\\
&&-2K\int_{0}^{t}\rho^{\epsilon}(s)\langle u^{\epsilon}(s),\tilde{\mathbb{E}}\nabla_{x}\eta^{\epsilon}(s,x^{\epsilon}(s))\cdot\sqrt{\epsilon}\dot{x}^{\epsilon}(s)\rangle ds,
\end{eqnarray}
and by (\ref{equ:b4}) as well as $(\mathbf{H_{3}})$
\begin{eqnarray}\label{equ:3.190}
\mathbb{E}(\rho^{\epsilon}(t))^{2}&=&\mathbb{E}\Big(\frac{-K\beta}{2}\|u^{\epsilon}(t)\|^{2}+\sqrt{\epsilon}K\langle u^{\epsilon}(t),\tilde{\mathbb{E}}\eta^{\epsilon}(t,x^{\epsilon}(t))\rangle\Big)^{2}\nonumber\\
&\leq& C_{T},
\end{eqnarray}
Note that
\begin{eqnarray}
\mathbb{E}\int_{0}^{t}\rho^{\epsilon}(s)\|\tilde{\mathbb{E}}\eta^{\epsilon}(s,x^{\epsilon}(s))\|^{2}ds&\leq& \int_{0}^{t}(\mathbb{E}\rho^{\epsilon}(s)^{2})^{\frac{1}{2}}(\mathbb{E}(\|\tilde{\mathbb{E}}\eta^{\epsilon}(s,x^{\epsilon}(s))\|^{4}))^{\frac{1}{2}}ds\nonumber\\
&\leq& C_{T},
\end{eqnarray}
and by (\ref{equ:b3.6}), (\ref{equ:b4}), (\ref{equ:3.190}), we have
\begin{eqnarray}
&&\mathbb{E}\int_{0}^{t}\rho^{\epsilon}(s)\langle u^{\epsilon}(s),\tilde{\mathbb{E}}\nabla_{x}\eta^{\epsilon}(s,x^{\epsilon}(s))\cdot\sqrt{\epsilon}\dot{x}^{\epsilon}(s)\rangle ds\nonumber\\
&&\leq\frac{M_{\eta}}{2}\mathbb{E}\int_{0}^{t}\rho^{\epsilon}(s)^{2}ds+\frac{M_{\eta}}{4}\mathbb{E}\int_{0}^{t}\|\sqrt{\epsilon}\dot{x}^{\epsilon}(s)\|^{4}ds+\frac{M_{\eta}}{4}\mathbb{E}\int_{0}^{t}\|u^{\epsilon}(s)\|^{4}ds\nonumber\\
&&\leq C_{T}.
\end{eqnarray}
Then by (\ref{equ:3.180})
\begin{eqnarray}
\sup_{0\leq t\leq T}\mathbb{E}\langle M_{1}^{\epsilon}\rangle(t)\leq C_{T}.
\end{eqnarray}
By the Burkh${\rm \ddot{o}}$lder-Davies-Gundy inequality, we have
\begin{eqnarray}\label{EBC}
\mathbb{E}\sup_{0\leq t\leq T}|M_{1}^{\epsilon}(t)|^{2}\leq C_{2}\mathbb{E}\langle M_{1}^{\epsilon}\rangle(t)\leq C_{T}.
\end{eqnarray}
Furthermore, by (\ref{equ:3.150}) and the H${\rm\ddot{o}}$lder inequality, we have
\begin{eqnarray*}
&&d\|u^{\epsilon}(t)\|^{2}\\
&\leq& \frac{4}{K\beta}dM_{1}^{\epsilon}(t)+\frac{4}{\alpha\beta}\|\tilde{\mathbb{E}}\eta^{\epsilon}(t,x^{\epsilon}(t))\|^{2}dt+\frac{4}{\beta}\|\tilde{\mathbb{E}}\nabla_{x}\eta^{\epsilon}(t,x^{\epsilon}(t))\|^{2}\|\sqrt{\epsilon}\dot{x}^{\epsilon}(t)\|\|u^{\epsilon}(t)\|dt\nonumber\\
&\leq&\frac{4}{K\beta}dM_{1}^{\epsilon}(t)+\frac{4}{\alpha\beta}\|\tilde{\mathbb{E}}\eta^{\epsilon}(t,x^{\epsilon}(t))\|^{2}dt+\frac{2M_{\eta}}{\beta}\|\sqrt{\epsilon}\dot{x}^{\epsilon}(t)\|^{2}dt+\frac{2M_{\eta}}{\beta}\|u^{\epsilon}(t)\|^{2}dt.
\end{eqnarray*}
Taking the upper bound and then taking expectation on both sides, by (\ref{MoEs}), $(\mathbf{H_{3}})$, (\ref{EBC}) and Gronwall's inequality,
\begin{eqnarray}\label{equ:ub3.24}
\mathbb{E}\sup_{0\leq t\leq T}\|u^{\epsilon}(t)\|^{2}&\leq&C_{T}(\mathbb{E}\sup_{0\leq t\leq T}|M_{1}^{\epsilon}(t)|^{2})^{\frac{1}{2}}+C_{T}\sup_{0\leq t\leq T}\mathbb{E}\|\tilde{\mathbb{E}}\eta^{\epsilon}(t,x^{\epsilon}(t))\|^{2}\nonumber\\
&&+C_{T}\sup_{0\leq t\leq T}\mathbb{E}\|\sqrt{\epsilon}\dot{x}^{\epsilon}(t)\|^{2}\nonumber\\ 
&\leq& C_{T}.
\end{eqnarray}
Further,
\begin{eqnarray*}
u^{\epsilon}(t)-u^{\epsilon}(s)=\frac{1}{\sqrt{\epsilon}}\int_{s}^{t}\tilde{\mathbb{E}}\eta^{\epsilon}(u,x^{\epsilon}(u))du,
\end{eqnarray*}
and by the similar proof as lemma $2.1$ in Watanabe~\cite{Wa},
\begin{eqnarray}\label{equ:uc3.25}
&&\mathbb{E}\|u^{\epsilon}(t)-u^{\epsilon}(s)\|^{4}=\mathbb{E}\Big\|\frac{1}{\sqrt{\epsilon}}\int_{s}^{t}\tilde{\mathbb{E}}\eta^{\epsilon}(u,x^{\epsilon}(u))du\Big\|^{4}\nonumber\\
&=&\frac{3!}{\epsilon^{2}}\int_{s}^{t}du_{1}\int_{s}^{u_{1}}du_{2}\int_{s}^{u_{2}}du_{3}\int_{s}^{u_{3}}\mathbb{E}\langle\tilde{\mathbb{E}}\eta^{\epsilon}(u_{1},x^{\epsilon}(u_{1})),\tilde{\mathbb{E}}\eta^{\epsilon}(u_{2},x^{\epsilon}(u_{2}))\rangle\nonumber\\
&&\cdot\langle\tilde{\mathbb{E}}\eta^{\epsilon}(u_{3},x^{\epsilon}(u_{3})),\tilde{\mathbb{E}}\eta^{\epsilon}(u_{4},x^{\epsilon}(u_{4}))\rangle du_{4}\nonumber\\
&\leq&\frac{C}{\epsilon}\int_{s}^{t}du_{1}\int_{s}^{u_{1}}du_{2}\int_{s}^{u_{2}}du_{3}\int_{s}^{u_{3}}m\Big(\frac{u_{3}-u_{4}}{\epsilon}\Big)^{\frac{1}{2}}m\Big(\frac{u_{1}-u_{2}}{\epsilon}\Big)^{\frac{1}{2}}du_{4}\nonumber\\
&\leq&C_{T}|t-s|\Big[\int_{0}^{\infty}m(s)^{\frac{1}{2}}ds\Big]^{2}\leq C_{T}|t-s|.
\end{eqnarray}
Then by (\ref{equ:ub3.24}) and (\ref{equ:uc3.25}), we have the following result,
\begin{lemma}\label{Lemu}
$\{u^{\epsilon}(t)\}_{0\leq\epsilon\leq1}$ is tight in space $C(0,T;\mathbb{R}^{d})$.
\end{lemma}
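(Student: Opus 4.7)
The plan is to invoke a standard tightness criterion for continuous $\mathbb{R}^{d}$-valued processes on $[0,T]$: it suffices to establish (i) tightness of the initial-time distributions $\{u^{\epsilon}(0)\}_{0\leq\epsilon\leq1}$, and (ii) a uniform-in-$\epsilon$ quantitative control on the increments, which together with (i) enables a Kolmogorov-Chentsov or Arzel\`a-Ascoli type compactness argument in $C(0,T;\mathbb{R}^{d})$.

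For step (i), note that $u^{\epsilon}(0) = 0$ by definition (\ref{lef}), so the initial marginals are trivially tight, being concentrated at the origin. For step (ii), the incremental fourth-moment estimate (\ref{equ:uc3.25}) combined with the uniform sup-norm bound (\ref{equ:ub3.24}) supply exactly the needed ingredients. Via Markov's inequality, (\ref{equ:ub3.24}) yields a uniform sup-norm control in probability: for each $\eta>0$ there exists $R>0$ with $\sup_{\epsilon}\mathbb{P}(\sup_{t\in[0,T]}\|u^{\epsilon}(t)\| > R) \leq \eta$. The estimate (\ref{equ:uc3.25}) then governs the modulus of continuity: applying Chebyshev to the pointwise incremental bound, followed by a standard partition-and-union-bound argument over a fine grid of $[0,T]$, shows that $w_{\delta}(u^{\epsilon}) \to 0$ in probability as $\delta \to 0$ uniformly in $\epsilon$. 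Coupling this with the sup-norm tightness and invoking the Arzel\`a-Ascoli characterisation of relatively compact sets in $C(0,T;\mathbb{R}^{d})$ yields the result.

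The main obstacle, which has already been overcome by the estimates (\ref{equ:ub3.24}) and (\ref{equ:uc3.25}), is precisely the uniformity in $\epsilon$. Since $\tilde{\mathbb{E}}\eta^{\epsilon}$ is bounded by $M_{\eta}$, the path $t\mapsto u^{\epsilon}(t)$ is individually Lipschitz, but with constant of order $M_{\eta}/(\alpha\sqrt{\epsilon})$, which diverges as $\epsilon\to 0$; pathwise regularity is therefore useless for the tightness argument. The essential content of the preceding computations is that the mixing hypothesis $(\mathbf{H_{2}})$ produces enough time decorrelation in the fourth-order correlator of $\tilde{\mathbb{E}}\eta^{\epsilon}(u,x^{\epsilon}(u))$ to convert the divergent pathwise bound into an $\epsilon$-uniform moment estimate on the increment. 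Once this uniformity is in hand, the tightness statement itself is a routine consequence of the classical criterion.
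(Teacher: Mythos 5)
Your proposal follows essentially the same route as the paper: the paper's own proof of this lemma consists precisely of the two estimates you cite, namely the uniform bound (\ref{equ:ub3.24}) on $\mathbb{E}\sup_{0\leq t\leq T}\|u^{\epsilon}(t)\|^{2}$ and the increment estimate (\ref{equ:uc3.25}), followed by an appeal to the standard moment criterion for tightness in $C(0,T;\mathbb{R}^{d})$. Your observations that $u^{\epsilon}(0)=0$ disposes of the initial marginals and that the pathwise Lipschitz constant of order $M_{\eta}/(\alpha\sqrt{\epsilon})$ is useless correctly identify where the work lies.

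There is, however, one step you should not wave through. The estimate (\ref{equ:uc3.25}) as recorded reads $\mathbb{E}\|u^{\epsilon}(t)-u^{\epsilon}(s)\|^{4}\leq C_{T}|t-s|$, and a fourth moment of the increment controlled by $|t-s|$ to the \emph{first} power does not satisfy the Kolmogorov--Chentsov/Billingsley condition $\mathbb{E}\|u^{\epsilon}(t)-u^{\epsilon}(s)\|^{a}\leq C|t-s|^{1+b}$ with $b>0$. Concretely, your partition-and-union-bound step then yields a bound on $\mathbb{P}(w_{\delta}(u^{\epsilon})>\lambda)$ of order $(T/\delta)\cdot(C\delta/\lambda^{4})=CT/\lambda^{4}$, which does not tend to zero as $\delta\to0$; the underlying maximal inequality over each subinterval genuinely fails when $b=0$. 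The gap is fixable from the computation already present in (\ref{equ:uc3.25}): in the quadruple integral the two mixing factors contribute, after the substitutions $v=(u_{3}-u_{4})/\epsilon$ and $v=(u_{1}-u_{2})/\epsilon$, a factor $\epsilon\int_{0}^{\infty}m(v)^{1/2}dv$ each, cancelling the prefactor $\epsilon^{-2}$, while the two remaining integrations each contribute a factor $|t-s|$. Hence the computation actually delivers $\mathbb{E}\|u^{\epsilon}(t)-u^{\epsilon}(s)\|^{4}\leq C_{T}|t-s|^{2}$, and with this sharper exponent your union-bound argument gives $\mathbb{P}(w_{\delta}(u^{\epsilon})>\lambda)\leq CT\delta/\lambda^{4}\to0$ uniformly in $\epsilon$, completing the proof. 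You should state and use the $|t-s|^{2}$ form explicitly rather than the weaker $|t-s|$ form.
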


Let 
\begin{eqnarray}\label{leg}
v^{\epsilon}(t)=\frac{1}{\alpha\sqrt{\epsilon}}\int_{0}^{t}e^{-\frac{\alpha}{\epsilon}(t-s)}\tilde{\mathbb{E}}\eta^{\epsilon}(s,x^{\epsilon}(s))ds,
\end{eqnarray}
then
\begin{eqnarray}\label{equ:1.54}
&&\mathbb{E}\|v^{\epsilon}(t)\|^{2}=\mathbb{E}\Big\|\frac{1}{\alpha\sqrt{\epsilon}}\int_{0}^{t}e^{-\frac{\alpha}{\epsilon}(t-s)}\tilde{\mathbb{E}}\eta^{\epsilon}(s,x^{\epsilon}(s))ds\Big\|^{2}\nonumber\\
&&=\frac{1}{\alpha^{2}\epsilon}\int_{0}^{t}\int_{0}^{t}e^{-\frac{\alpha}{\epsilon}(t-s)}e^{-\frac{\alpha}{\epsilon}(t-r)}\mathbb{E}\langle\tilde{\mathbb{E}}\eta^{\epsilon}(s,x^{\epsilon}(s)),\tilde{\mathbb{E}}\eta^{\epsilon}(r,x^{\epsilon}(r))\rangle dsdr\nonumber\\
&&\leq\frac{C}{\alpha^{2}\epsilon}\int_{0}^{t}dr\int_{r}^{t}e^{-\frac{\alpha}{\epsilon}(t-s)}e^{-\frac{\alpha}{\epsilon}(t-r)}m\Big(\frac{s-r}{\epsilon}\Big)ds\nonumber\\
&&\leq\frac{CK}{\alpha^{2}}\int_{0}^{t}e^{-\frac{\alpha}{\epsilon}(t-r)}dr\leq \epsilon\frac{CK}{\alpha^{2}}\to0,\quad\epsilon\to0.
\end{eqnarray}
Then $\{v^{\epsilon}(t)\}$ is tight in space $C(0,T;\mathbb{R}^{d})$. Note that $I_{2}^{\epsilon}(t)=u^{\epsilon}(t)-v^{\epsilon}(t)$, then $\{I_{2}^{\epsilon}(t)\}$ is tight in space $C(0,T;\mathbb{R}^{d})$. By $(\mathbf{H_{1}})$-$(\mathbf{H_{3}})$,
\begin{eqnarray}\label{equ:1.11}
&&\|I_{1}^{\epsilon}(t)\|^{2}\nonumber\\
&\leq&4\|x_{0}\|^{2}+4\Big(\frac{\epsilon}{\alpha}\Big)^{2}\|y_{0}\|^{2}+\frac{4}{\alpha^{2}}\Big(\int_{0}^{t}(1-e^{-\frac{\alpha}{\epsilon}(t-u)})\|\nabla V(x^{\epsilon}(u),\mu_{u}^{\epsilon})\|du\Big)^{2}\nonumber\\
&\leq&4\|x_{0}\|^{2}+4\Big(\frac{\epsilon}{\alpha}\Big)^{2}\|y_{0}\|^{2}+\frac{8L_{V}^{2}T}{\alpha^{2}}\int_{0}^{t}\|x^{\epsilon}(u)\|^{2}du\nonumber\\
&&+\frac{8L_{V}^{2}T}{\alpha^{2}}\int_{0}^{t}\mathbb{E}\|x^{\epsilon}(u)\|^{2}du+\frac{8T^{2}}{\alpha^{2}}\|\nabla V(0,\delta_{0})\|^{2}.
\end{eqnarray}
By $(\ref{equ:1.10})$, Gronwall's inequality and the tightness of $I_{2}^{\epsilon}(t)$, one may easily obtain
\begin{eqnarray}\label{equ:x2}
\sup_{0\leq t\leq T}\mathbb{E}\|x^{\epsilon}(t)\|^{2}\leq C_{T}.
\end{eqnarray}
Combining (\ref{equ:1.11}) with (\ref{equ:x2}), we have
\begin{eqnarray}\label{equ:1.55}
\mathbb{E}\sup_{0\leq t\leq T}\|I_{1}^{\epsilon}(t)\|^{2}\leq C_{T}.
\end{eqnarray}
Note that
\begin{eqnarray}\label{equ:1.56}
I_{1}^{\epsilon}(t)&=&x_{0}+\frac{\epsilon}{\alpha}y_{0}(1-e^{-\frac{\alpha}{\epsilon}t})-\frac{1}{\alpha}\int_{0}^{t}(1-e^{-\frac{\alpha}{\epsilon}(t-s)})\nabla V(x^{\epsilon}(s),\mu_{s}^{\epsilon})ds.\nonumber\\
&\triangleq& x_{0}+I_{1,1}^{\epsilon}(t)+I_{1,2}^{\epsilon}(t)
\end{eqnarray}
By a basic inequality, $|e^{x}-e^{y}|\leq|x-y|(e^{x}+e^{y}),~ x,y\in \mathbb{R}$, we have
\begin{eqnarray}\label{equ:1.57}
\mathbb{E}\|I_{1,1}^{\epsilon}(t)-I_{1,1}^{\epsilon}(s)\|^{4}\leq\Big(\frac{\epsilon}{\alpha}\Big)^{4}\|y_{0}\|^{4}|e^{-\frac{\alpha}{\epsilon}t}-e^{-\frac{\alpha}{\epsilon}s}|^{4}\leq 16\|y_{0}\|^{4}|t-s|^{4}.
\end{eqnarray}
Direct calculation yields,
\begin{eqnarray}\label{equ:1.58}
\mathbb{E}\|I_{1,2}^{\epsilon}(t)-I_{1,2}^{\epsilon}(s)\|^{4}&\leq&\frac{8}{\alpha^{4}}\mathbb{E}\Big\|\int_{s}^{t}(1-e^{-\frac{\alpha}{\epsilon}(t-u)})\nabla V(x^{\epsilon}(u),\mu_{u}^{\epsilon})du\Big\|^{4}\nonumber\\
&&+\frac{8}{\alpha^{4}}\mathbb{E}\Big\|\int_{0}^{s}(e^{-\frac{\alpha}{\epsilon}(t-u)}-e^{-\frac{\alpha}{\epsilon}(s-u)})\nabla V(x^{\epsilon}(u),\mu_{u}^{\epsilon})du\Big\|^{4}\nonumber\\
&\triangleq&I_{1,2,1}^{\epsilon}(t)+I_{1,2,2}^{\epsilon}(t).
\end{eqnarray}
By the H${\rm\ddot{o}}$lder inequality and $(\mathbf{H_{1}})$,
\begin{eqnarray}\label{Iequ1}
I_{1,2,1}^{\epsilon}(t)&\leq&\frac{8}{\alpha^{4}}\Big(\int_{s}^{t}(1-e^{-\frac{\alpha}{\epsilon}(t-u)})^{\frac{4}{3}}du\Big)^{3}\Big(\int_{s}^{t}\mathbb{E}\|\nabla V(x^{\epsilon}(u),\mu_{u}^{\epsilon})\|^{4}du\Big)\nonumber\\
&\leq&\frac{8^{2}}{\alpha^{4}}|t-s|^{3}\Big[16L_{V}^{4}\int_{s}^{t}\mathbb{E}\|x^{\epsilon}(u)\|^{4}du+\|\nabla V(0,\delta_{0}\|^{4}T\Big].
\end{eqnarray}
Similarly,
\begin{eqnarray}\label{Iequ2}
&&I_{1,2,2}^{\epsilon}(t)\leq\frac{8}{\alpha^{4}}\mathbb{E}\Big[\int_{0}^{s}|e^{-\frac{\alpha}{\epsilon}(t-u)}-e^{-\frac{\alpha}{\epsilon}(s-u)}|\|\nabla V(x^{\epsilon}(u),\mu_{u}^{\epsilon})\|du\Big]^{4}\\
&\leq&\frac{8}{\alpha^{4}}\Big(\int_{0}^{s}|e^{-\frac{\alpha}{\epsilon}(t-u)}-e^{-\frac{\alpha}{\epsilon}(s-u)}|du\Big)\Big(\int_{0}^{s}|e^{-\frac{\alpha}{\epsilon}(t-u)}-e^{-\frac{\alpha}{\epsilon}(s-u)}|\|\nabla V(x^{\epsilon}(u),\mu_{u}^{\epsilon})\|^{\frac{4}{3}}du\Big)^{3}.\nonumber
\end{eqnarray}
By a basic inequality, $|e^{x}-e^{y}|\leq|x-y|(e^{x}+e^{y}),~ x,y\in \mathbb{R}$,
\begin{eqnarray}\label{Iequ3}
\int_{0}^{s}|e^{-\frac{\alpha}{\epsilon}(t-u)}-e^{-\frac{\alpha}{\epsilon}(s-u)}|du
&\leq&\frac{2\alpha}{\epsilon}|t-s|\int_{0}^{s}e^{-\frac{\alpha}{\epsilon}(s-u)}du\leq 2|t-s|.\nonumber\\
\end{eqnarray}
By $(\mathbf{H_{1}})$,
\begin{eqnarray}\label{Iequ4}
&&\Big(\int_{0}^{s}|e^{-\frac{\alpha}{\epsilon}(t-u)}-e^{-\frac{\alpha}{\epsilon}(s-u)}|\|\nabla V(x^{\epsilon}(u),\mu_{u}^{\epsilon})\|^{\frac{4}{3}}du\Big)^{3}\nonumber\\
&\leq&CL_{V}^{4}T^{2}\int_{0}^{s}\mathbb{E}\|x^{\epsilon}(u)\|^{4}du+CL_{V}^{4}T^{3}\|\nabla V(0,\delta_{0})\|^{4}.
\end{eqnarray}
Then by (\ref{Iequ2})-(\ref{Iequ4}), we obtain,
\begin{eqnarray}\label{Iequ5}
I_{1,2,2}^{\epsilon}(t)&\leq& \frac{C}{\alpha^{4}}|t-s|\Big(L_{V}^{4}T^{2}\int_{0}^{s}\mathbb{E}\|x^{\epsilon}(u)\|^{4}du+L_{V}^{4}T^{3}\|\nabla V(0,\delta_{0})\|^{4}\Big).\nonumber\\
\end{eqnarray}
Combining (\ref{equ:1.58}) with (\ref{Iequ1}) and (\ref{Iequ5}),
\begin{eqnarray}\label{Iequ6}
\mathbb{E}\|I_{1,2}^{\epsilon}(t)-I_{1,2}^{\epsilon}(s)\|^{4}&\leq&C_{T}\Big(1+\int_{s}^{t}\mathbb{E}\|x^{\epsilon}(u)\|^{4}du\Big)|t-s|.
\end{eqnarray}
Next we estimate $\mathbb{E}\|x^{\epsilon}(t)\|^{4}$. In fact, by (\ref{equ:1.10}), (\ref{lee}), (\ref{lef}), (\ref{leg}) and (\ref{equ:b4}), we have
\begin{eqnarray}\label{Iequ7}
\mathbb{E}\|x^{\epsilon}(t)\|^{4}
&\leq& C\|x_{0}\|^{4}+C\Big(\frac{\epsilon}{\alpha}\Big)^{4}\|y_{0}\|^{4}+C\Big(\frac{1}{\alpha}\Big)^{4}T^{3}L_{V}^{4}\int_{0}^{t}\mathbb{E}\|x^{\epsilon}(u)\|^{4}du\nonumber\\
&&+C\Big(\frac{1}{\alpha}\Big)^{4}T^{4}\|\nabla V(0,\delta_{0})\|^{4}+C_{T}+C\mathbb{E}\|v^{\epsilon}(t)\|^{4}.
\end{eqnarray}
Note that
\begin{eqnarray}\label{lequ8}
\mathbb{E}\|v^{\epsilon}(t)\|^{4}\leq\mathbb{E}\Big[\frac{1}{\alpha\sqrt{\epsilon}}\int_{0}^{t}e^{-\frac{\alpha}{\epsilon}(t-u)}\|\tilde{\mathbb{E}}\eta^{\epsilon}(u,x^{\epsilon}(u))\|du\Big]^{4}
\leq \frac{M_{\eta}^{4}}{\alpha^{8}}\epsilon^{2}.
\end{eqnarray}
Then by (\ref{Iequ7}), (\ref{lequ8}) and Gronwall's inequality, we have
\begin{eqnarray}\label{Iequ9}
\sup_{0\leq t\leq T}\mathbb{E}\|x^{\epsilon}(t)\|^{4}\leq C_{T},
\end{eqnarray}
and together with (\ref{Iequ6}), we have
\begin{eqnarray}\label{Iequ10}
\mathbb{E}\|I_{1,2}^{\epsilon}(t)-I_{1,2}^{\epsilon}(s)\|^{4}&\leq&C_{T}|t-s|.
\end{eqnarray}
Thus
\begin{eqnarray}\label{Iequ11}
\mathbb{E}\|I_{1}^{\epsilon}(t)-I_{1}^{\epsilon}(s)\|^{4}&\leq&C\mathbb{E}\|I_{1,1}^{\epsilon}(t)-I_{1,1}^{\epsilon}(s)\|^{4}+C\mathbb{E}\|I_{1,2}^{\epsilon}(t)-I_{1,2}^{\epsilon}(s)\|^{4}\nonumber\\
&\leq& C_{T}|t-s|,
\end{eqnarray}
together with (\ref{equ:1.55}), we obtain $\{I_{1}^{\epsilon}(t)\}$ is tight in $C(0,T;\mathbb{R}^{d})$. By (\ref{equ:1.10}), (\ref{Lemu}) and (\ref{lequ8}), we obtain that
\begin{theorem}
$\{x^{\epsilon}(t)\}_{0<\epsilon\leq1}$ is tight in space  $C(0,T;\mathbb{R}^{d})$.
\end{theorem}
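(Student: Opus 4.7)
The decomposition $x^\epsilon = I_1^\epsilon + I_2^\epsilon$ from (\ref{equ:1.10}), combined with the further split $I_2^\epsilon = u^\epsilon - v^\epsilon$ from (\ref{lee})--(\ref{leg}), reduces the claim to proving tightness of each of the three families $\{I_1^\epsilon\}$, $\{u^\epsilon\}$, $\{v^\epsilon\}$ in $C(0,T;\mathbb{R}^d)$. Since addition is continuous on this Polish space, marginal tightness of the three summands yields joint tightness and hence tightness of the sum via the continuous mapping theorem. So the plan is to dispatch each summand in turn.

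For $\{u^\epsilon\}$, tightness is exactly Lemma \ref{Lemu}. For $\{v^\epsilon\}$, estimate (\ref{lequ8}) already gives $\sup_{0\le t\le T}\mathbb{E}\|v^\epsilon(t)\|^4 \le M_\eta^4\epsilon^2/\alpha^8 \to 0$, so $v^\epsilon$ converges to zero in $L^4$ at every fixed time. To upgrade this to tightness in the uniform topology I would derive an increment estimate for $v^\epsilon$ in the spirit of (\ref{equ:uc3.25}), working from the ODE $\dot{v}^\epsilon(t) = -(\alpha/\epsilon) v^\epsilon(t) + (\alpha\sqrt{\epsilon})^{-1}\tilde{\mathbb{E}}\eta^\epsilon(t, x^\epsilon(t))$ and exploiting the mixing hypothesis $(\mathbf{H_2})$ exactly as in the proof of Lemma \ref{Lemu}. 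In fact this should yield the stronger conclusion $v^\epsilon \Rightarrow 0$ in $C(0,T;\mathbb{R}^d)$, which implies tightness trivially.

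The remaining piece $\{I_1^\epsilon\}$ I would handle via the supremum bound (\ref{equ:1.55}) together with a Kolmogorov-Chentsov argument based on the increment bound (\ref{Iequ11}). The main subtlety I anticipate here is that the power $|t-s|^1$ appearing in (\ref{Iequ11}) sits exactly at the boundary of the Kolmogorov-Chentsov criterion; to secure a genuine $|t-s|^{1+\beta}$ bound, I would sharpen the estimate on $I_{1,2,2}^\epsilon$ by interpolating the mean-value bound $|e^{-\alpha(t-u)/\epsilon}-e^{-\alpha(s-u)/\epsilon}| \le (\alpha|t-s|/\epsilon)\,e^{-\alpha(s-u)/\epsilon}$ with the trivial bound $\le e^{-\alpha(s-u)/\epsilon}$ at exponent $\theta = 1/2$. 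This balances the $\epsilon^{-1/2}$ blow-up against the $\epsilon^{1/2}$-decay coming from integrating the exponential kernel and produces an $\epsilon$-uniform bound of order $|t-s|^{2}$, which together with the already-sharp $|t-s|^{3}$ estimate on $I_{1,2,1}^\epsilon$ from (\ref{Iequ1}) puts Kolmogorov-Chentsov in business. Combining the three tightness statements then finishes the proof.
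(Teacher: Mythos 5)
Your proposal follows essentially the same route as the paper: the decomposition $x^{\epsilon}=I_{1}^{\epsilon}+u^{\epsilon}-v^{\epsilon}$ from (\ref{equ:1.10}), (\ref{lee})--(\ref{leg}), Lemma \ref{Lemu} for $u^{\epsilon}$, the vanishing of $v^{\epsilon}$ via (\ref{equ:1.54})/(\ref{lequ8}), and a uniform-moment plus increment argument for $I_{1}^{\epsilon}$; your reduction of tightness of the sum to marginal tightness of the three summands (joint tightness in the product space plus continuity of addition) is also what the paper implicitly does. Where you genuinely add value is the point you flag yourself: the increment bound (\ref{Iequ11}) as the paper states it is only $C_{T}|t-s|$, i.e.\ exponent $1+\beta$ with $\beta=0$, which does \emph{not} satisfy the Kolmogorov--Chentsov tightness criterion (a compensated Poisson process obeys such a bound and is not tight in $C(0,T)$), and the paper asserts tightness of $I_{1}^{\epsilon}$ without addressing this. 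Your repair is correct and concerns the only offending term $I_{1,2,2}^{\epsilon}$: interpolating $|e^{-\alpha(t-u)/\epsilon}-e^{-\alpha(s-u)/\epsilon}|\leq\min\{1,\alpha|t-s|/\epsilon\}\,e^{-\alpha(s-u)/\epsilon}$ at exponent $1/2$ gives $\int_{0}^{s}|e^{-\alpha(t-u)/\epsilon}-e^{-\alpha(s-u)/\epsilon}|\,du\leq(\epsilon|t-s|/\alpha)^{1/2}$, hence $I_{1,2,2}^{\epsilon}\leq C_{T}|t-s|^{2}$ uniformly in $\epsilon\leq1$ (keeping all four H\"older factors even yields $|t-s|^{4}$), which together with the $|t-s|^{3}$ bound (\ref{Iequ1}) and (\ref{equ:1.57}) puts the criterion in force. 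One caveat: the identical borderline exponent appears in (\ref{equ:uc3.25}), on which Lemma \ref{Lemu} rests, and you cite that lemma without comment; to be consistent you should observe that the quadruple integral there actually produces $C_{T}|t-s|^{2}$ (the two mixing factors each contribute a factor $\epsilon$, cancelling the $\epsilon^{-2}$, and the two surviving time integrals each give $|t-s|$), so the lemma is sound and the paper merely under-reports the exponent, exactly as it does in (\ref{Iequ11}).
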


\section{Diffusion approximation}\label{DA}
  \setcounter{equation}{0}
  \renewcommand{\theequation}
{4.\arabic{equation}}
We apply a diffusion approximation to derive the limit of the solution $x^{\epsilon}(t)$ to equation (\ref{equ:main}). By the tightness of $\{x^{\epsilon}\}$, there is a subsequence, which we still write as $\{x^{\epsilon}\}$, converges in distribution to $x\in C(0,T;\mathbb{R}^{d})$. By the Skorohod theorem~\cite{B}, one can construct a new probability space and new random variables without changing the distribution such that (here we don't change the notations) $x^{\epsilon}$ almost surely to $x$ in space $C(0,T;\mathbb{R}^{d})$. Next we determine the limit $x$. For any $\phi\in C^{3}(\mathbb{R})$, $h\in\mathbb{R}^{d}$ and any $t>0$, integration by parts yields
\begin{eqnarray}\label{DA}
&&\phi(\langle x^{\epsilon}(s),h\rangle)-\phi(\langle x^{\epsilon}(0),h\rangle)=\int_{0}^{t}\phi'(\langle x^{\epsilon}(s),h\rangle)\langle\dot{x}^{\epsilon}(s),h\rangle ds\nonumber\\
&&=-\frac{\epsilon}{\alpha}[\phi'(\langle x^{\epsilon}(t),h\rangle)\langle\dot{x}^{\epsilon}(t),h\rangle-\phi'(\langle x^{\epsilon}(0),h\rangle)\langle\dot{x}^{\epsilon}(0),h\rangle]\nonumber\\
&&\quad+\frac{\epsilon}{\alpha}\int_{0}^{t}\phi''(\langle x^{\epsilon}(s),h\rangle)|\langle\dot{x}^{\epsilon}(s),h\rangle|^{2}ds-\frac{1}{\alpha}\int_{0}^{t}\phi'(\langle x^{\epsilon}(s),h\rangle)\langle \nabla V(x^{\epsilon}(s),\mu_{s}^{\epsilon}),h\rangle ds\nonumber\\
&&\quad+\frac{1}{\alpha\sqrt{\epsilon}}\int_{0}^{t}\phi'(\langle x^{\epsilon}(s),h\rangle)\langle\tilde{\mathbb{E}}\eta^{\epsilon}(s,x^{\epsilon}(s)),h \rangle ds.
\end{eqnarray}
Next, let's first prove that
\begin{eqnarray}
\mathbb{E}\int_{0}^{t}\sqrt{\epsilon}\|\dot{x}^{\epsilon}(s)\|^{2}ds\leq C_{T}.
\end{eqnarray}
Multiplying $\sqrt{\epsilon}\dot{x}^{\epsilon}(t)$ on both sides of (\ref{equ:main}) and integrating from $0$ to $t$, we have
\begin{eqnarray}
&&\frac{\epsilon^{\frac{3}{2}}}{2}(\|\dot{x}^{\epsilon}(t)\|^{2}-\|\dot{x}^{\epsilon}(0)\|^{2})+\alpha\sqrt{\epsilon}\int_{0}^{t}\|\dot{x}^{\epsilon}(s)\|^{2}ds\nonumber\\
&&=-\sqrt{\epsilon}\int_{0}^{t}\langle \nabla V(x^{\epsilon}(s),\mu_{s}^{\epsilon}),\dot{x}^{\epsilon}(s)\rangle+\int_{0}^{t}\langle\tilde{\mathbb{E}}\eta^{\epsilon}(s,x^{\epsilon}(s)),\dot{x}^{\epsilon}(s) \rangle ds\nonumber
\end{eqnarray}
\begin{eqnarray}
&&=-\sqrt{\epsilon}\int_{0}^{t}\langle \nabla V(x^{\epsilon}(s),\mu_{s}^{\epsilon}),\dot{x}^{\epsilon}(s)\rangle ds+\langle\tilde{\mathbb{E}}\eta^{\epsilon}(t,x^{\epsilon}(t)),x^{\epsilon}(t) \rangle\nonumber\\
&&-\langle\tilde{\mathbb{E}}\eta^{\epsilon}(0,x^{\epsilon}(0)),x^{\epsilon}(0) \rangle-\int_{0}^{t}\langle x^{\epsilon}(s),\tilde{\mathbb{E}}\partial_{t}\eta^{\epsilon}(s,x^{\epsilon}(s))\rangle ds
\nonumber\\
&&-\int_{0}^{t}\langle x^{\epsilon}(s),\tilde{\mathbb{E}}\nabla_{x}\eta^{\epsilon}(s,x^{\epsilon}(s))y_{0}\rangle e^{-\frac{\alpha}{\epsilon}s} ds\\
&&+\frac{1}{\epsilon}\int_{0}^{t}\int_{0}^{s}e^{-\frac{\alpha}{\epsilon}(s-u)}\langle x^{\epsilon}(s),\tilde{\mathbb{E}}\nabla_{x}\eta^{\epsilon}(s,x^{\epsilon}(s))\nabla V(x^{\epsilon}(u),\mu_{u}^{\epsilon})\rangle duds\nonumber\\
&&-\frac{1}{\epsilon\sqrt{\epsilon}}\int_{0}^{t}\int_{0}^{s}e^{-\frac{\alpha}{\epsilon}(s-u)}\langle x^{\epsilon}(s),\tilde{\mathbb{E}}\nabla_{x}\eta^{\epsilon}(s,x^{\epsilon}(s))\tilde{\mathbb{E}}\eta^{\epsilon}(u,x^{\epsilon}(u))\rangle duds.\nonumber
\end{eqnarray}
By a basic inequality: $2xy\leq x^{2}+y^{2}$, ($\mathbf{H}_{1}$)-($\mathbf{H}_{3}$), (\ref{equ:x2}), Lemma \ref{MoEs}, and a mixing result~{\rm\cite[lemma 5.14]{DW}},
\begin{eqnarray}\label{equ:1.63}
&&\alpha\sqrt{\epsilon}\int_{0}^{t}\mathbb{E}\|\dot{x}^{\epsilon}(s)\|^{2}ds\nonumber\\
&&\leq\frac{1}{2}\int_{0}^{t}\mathbb{E}\|\nabla V(x^{\epsilon}(s),\mu_{s}^{\epsilon})\|^{2}ds+\frac{1}{2}\int_{0}^{t}\mathbb{E}\|\sqrt{\epsilon}\dot{x}^{\epsilon}(s)\|^{2}ds+\frac{1}{2}\mathbb{E}\|\tilde{\mathbb{E}}\eta^{\epsilon}(s,x^{\epsilon}(s))\|^{2}\nonumber\\
&&+\frac{1}{2}\mathbb{E}\|x^{\epsilon}(t)\|^{2}+\frac{1}{2}\mathbb{E}\|\tilde{\mathbb{E}}\eta^{\epsilon}(0,x^{\epsilon}(0))\|^{2}+\frac{1}{2}\|x_{0}\|^{2}+\frac{1}{2}\int_{0}^{t}\mathbb{E}\|x^{\epsilon}(s)\|^{2}ds\nonumber\\
&&+\frac{1}{2}\mathbb{E}\int_{0}^{t}\|\tilde{\mathbb{E}}\partial_{t}\eta^{\epsilon}(s,x^{\epsilon}(s))\|^{2}ds+\|y_{0}\|\int_{0}^{t}\mathbb{E}\|x^{\epsilon}(s)\|^{2}ds+\|y_{0}\|\int_{0}^{t}\mathbb{E}\|\tilde{\mathbb{E}}\nabla_{x}\eta^{\epsilon}(s,x^{\epsilon}(s))\|^{2}ds\nonumber\\
&&+\frac{M_{\eta}}{\epsilon}\int_{0}^{t}\int_{0}^{s}e^{-\frac{\alpha}{\epsilon}(s-u)}\mathbb{E}\Big[\frac{1}{2}\|x^{\epsilon}(s)\|^{2}+\frac{1}{2}\|\nabla V(x^{\epsilon}(u),\mu_{u}^{\epsilon})\|^{2}\Big]duds\nonumber\\
&&+\frac{C}{\epsilon\sqrt{\epsilon}}\int_{0}^{t}\int_{0}^{s}e^{-\frac{\alpha}{\epsilon}(s-u)}(\mathbb{E}\|x^{\epsilon}(s)\tilde{\mathbb{E}}\nabla_{x}\eta^{\epsilon}(s,x^{\epsilon}(s))\|^{2})^{\frac{1}{2}}\nonumber\\
&&\quad\cdot(\mathbb{E}\|\tilde{\mathbb{E}}\eta^{\epsilon}(u,x^{\epsilon}(u))\|^{4})^{\frac{1}{4}}m\Big(\frac{|s-u|}{\epsilon}\Big)^{\frac{1}{4}}duds.\nonumber\\
&&\leq C_{T}.
\end{eqnarray}
that is
\begin{eqnarray}
\alpha\sqrt{\epsilon}\int_{0}^{t}\mathbb{E}\|\dot{x}^{\epsilon}(s)\|^{2}ds\leq C_{T}.
\end{eqnarray}
Note that $\frac{1}{\alpha\sqrt{\epsilon}}\int_{0}^{t}\phi'(\langle x^{\epsilon}(s),h\rangle)\langle\tilde{\mathbb{E}}\eta^{\epsilon}(s,x^{\epsilon}(s)),h \rangle ds$ is also singular term, we need construct a martingale to deal with it. Define
\begin{eqnarray}
Y_{1}^{\epsilon}(t)&=&\frac{1}{\alpha\sqrt{\epsilon}}\int_{t}^{\infty}\mathbb{E}\Big[\phi'(\langle x^{\epsilon}(t),h\rangle)\langle\tilde{\mathbb{E}}\eta^{\epsilon}(s,x^{\epsilon}(t)),h \rangle\Big|\mathcal{F}_{0}^{\frac{t}{\epsilon}}\Big]ds\nonumber\\
&=&\frac{1}{\alpha\sqrt{\epsilon}}\phi'(\langle x^{\epsilon}(t),h\rangle)\langle\tilde{\mathbb{E}}\eta^{\epsilon}(s,x^{\epsilon}(t)),h \rangle \int_{t}^{\infty}m\Big(\frac{s-t}{\epsilon}\Big)ds\nonumber\\
&=&\frac{\sqrt{\epsilon}K}{\alpha}\phi'(\langle x^{\epsilon}(t),h\rangle)\langle\tilde{\mathbb{E}}\eta^{\epsilon}(t,x^{\epsilon}(t)),h \rangle,
\end{eqnarray} 
and 
\begin{eqnarray}\label{Z1}
&&Z_{1}^{\epsilon}(t)=\lim_{\delta\to0}\frac{\mathbb{E}[Y_{1}^{\epsilon}(t+\delta)-Y_{1}^{\epsilon}(t)]}{\delta}\nonumber\\
&=&\frac{\sqrt{\epsilon}K}{\alpha}\lim_{\delta\to0}\frac{\mathbb{E}[(\phi'(\langle x^{\epsilon}(t+\delta),h\rangle)-\phi'(\langle x^{\epsilon}(t),h\rangle))\langle\tilde{\mathbb{E}}\eta^{\epsilon}(t+\delta,x^{\epsilon}(t+\delta)),h \rangle|\mathcal{F}_{0}^{\frac{t}{\epsilon}}]}{\delta}\nonumber\\
&&+\frac{\sqrt{\epsilon}K}{\alpha}\lim_{\delta\to0}\frac{\mathbb{E}[\phi'(\langle x^{\epsilon}(t),h\rangle)(\langle\tilde{\mathbb{E}}\eta^{\epsilon}(t+\delta,x^{\epsilon}(t+\delta)),h \rangle-\langle\tilde{\mathbb{E}}\eta^{\epsilon}(t,x^{\epsilon}(t+\delta)),h \rangle)|\mathcal{F}_{0}^{\frac{t}{\epsilon}}]}{\delta}\nonumber\\
&&+\frac{\sqrt{\epsilon}K}{\alpha}\lim_{\delta\to0}\frac{\mathbb{E}[\phi'(\langle x^{\epsilon}(t),h\rangle)(\langle\tilde{\mathbb{E}}\eta^{\epsilon}(t,x^{\epsilon}(t+\delta)),h \rangle-\langle\tilde{\mathbb{E}}\eta^{\epsilon}(t,x^{\epsilon}(t)),h \rangle)|\mathcal{F}_{0}^{\frac{t}{\epsilon}}]}{\delta}\nonumber\\
&=&\frac{\sqrt{\epsilon}K}{\alpha}\mathbb{E}\Big[\lim_{\delta\to0}\frac{\phi''(\langle x^{\epsilon}(t)+\theta(x^{\epsilon}(t+\delta)-x^{\epsilon}(t)),h\rangle)\langle x^{\epsilon}(t+\delta)-x^{\epsilon}(t),h\rangle}{\delta}\cdot\nonumber\\
&&\quad\quad\quad\quad\quad\quad\quad\quad\quad\quad\quad\quad\quad\quad\quad\quad\quad\langle\tilde{\mathbb{E}}\eta^{\epsilon}(t+\delta,x^{\epsilon}(t+\delta)),h\rangle\Big|\mathcal{F}_{0}^{\frac{t}{\epsilon}}\Big]\nonumber\\
&&+\frac{\sqrt{\epsilon}K}{\alpha}\mathbb{E}\Big[\lim_{\delta\to0}\frac{\phi'(\langle x^{\epsilon}(t),h\rangle)\langle\tilde{\mathbb{E}}\eta^{\epsilon}(t+\delta,x^{\epsilon}(t+\delta))-\tilde{\mathbb{E}}\eta^{\epsilon}(t,x^{\epsilon}(t)),h\rangle}{\delta}\Big|\mathcal{F}_{0}^{\frac{t}{\epsilon}}\Big]\nonumber\\
&&+\frac{\sqrt{\epsilon}K}{\alpha}\mathbb{E}\Big[\lim_{\delta\to0}\frac{\phi'(\langle x^{\epsilon}(t),h\rangle)\langle\tilde{\mathbb{E}}\nabla_{x}\eta^{\epsilon}(t,x^{\epsilon}(t)+\theta(x^{\epsilon}(t+\delta)-x^{\epsilon}(t))(x^{\epsilon}(t+\delta)-x^{\epsilon}(t)),h\rangle}{\delta}\Big|\mathcal{F}_{0}^{\frac{t}{\epsilon}}\Big]\nonumber\\
&=&\frac{\sqrt{\epsilon}K}{\alpha}\phi''(\langle x^{\epsilon}(t),h\rangle)\langle\dot{x}^{\epsilon}(t),h\rangle\langle\tilde{\mathbb{E}}\eta^{\epsilon}(t,x^{\epsilon}(t)),h\rangle-\frac{K\beta}{\alpha\sqrt{\epsilon}}\phi'(\langle x^{\epsilon}(t),h\rangle)\langle\tilde{\mathbb{E}}\eta^{\epsilon}(t,x^{\epsilon}(t)),h\rangle\nonumber\\
&&+\frac{\sqrt{\epsilon}K}{\alpha}\phi'(\langle x^{\epsilon}(t),h\rangle)\langle\tilde{\mathbb{E}}\nabla_{x}\eta^{\epsilon}(t,x^{\epsilon}(t))\dot{x}^{\epsilon}(t),h\rangle,
\end{eqnarray}
Then 
\begin{eqnarray*}
Y_{1}^{\epsilon}(t)-\int_{0}^{t}Z_{1}^{\epsilon}(s)ds
\end{eqnarray*}
is $\mathcal{F}_{0}^{\frac{t}{\epsilon}}$-martingale, that is 
\begin{eqnarray}\label{MY1}
&&M_{1}^{\epsilon}(t)\triangleq Y_{1}^{\epsilon}(t)-\int_{0}^{t}Z_{1}^{\epsilon}(s)ds\nonumber\\
&=&\frac{\sqrt{\epsilon}K}{\alpha}\phi'(\langle x^{\epsilon}(t),h\rangle)\langle\tilde{\mathbb{E}}\eta^{\epsilon}(t,x^{\epsilon}(t)),h \rangle-\frac{\sqrt{\epsilon}K}{\alpha}\int_{0}^{t}\phi''(\langle x^{\epsilon}(s),h\rangle)\langle\dot{x}^{\epsilon}(s),h\rangle\langle\tilde{\mathbb{E}}\eta^{\epsilon}(s,x^{\epsilon}(s)),h\rangle ds\nonumber\\
&+&\frac{K\beta}{\alpha\sqrt{\epsilon}}\int_{0}^{t}\phi'(\langle x^{\epsilon}(s),h\rangle)\langle\tilde{\mathbb{E}}\eta^{\epsilon}(s,x^{\epsilon}(s)),h\rangle ds-\frac{\sqrt{\epsilon}K}{\alpha}\int_{0}^{t}\phi'(\langle x^{\epsilon}(s),h\rangle)\langle\tilde{\mathbb{E}}\nabla_{x}\eta^{\epsilon}(s,x^{\epsilon}(s))\dot{x}^{\epsilon}(s),h\rangle ds\nonumber\\
&=&\frac{\sqrt{\epsilon}K}{\alpha}\phi'(\langle x^{\epsilon}(t),h\rangle)\langle\tilde{\mathbb{E}}\eta^{\epsilon}(t,x^{\epsilon}(t)),h \rangle+\frac{\epsilon^{\frac{3}{2}}K}{\alpha^{2}}[\phi''(\langle x^{\epsilon}(t),h\rangle)\langle\tilde{\mathbb{E}}\eta^{\epsilon}(t,x^{\epsilon}(t)),h \rangle\langle\dot{x}^{\epsilon}(t),h\rangle\nonumber\\
&&\quad-\phi''(\langle x^{\epsilon}(0),h\rangle)\langle\tilde{\mathbb{E}}\eta^{\epsilon}(0,x^{\epsilon}(0)),h \rangle\langle\dot{x}^{\epsilon}(0),h\rangle]\nonumber\\
&&-\frac{\epsilon^{\frac{3}{2}}K}{\alpha^{2}}\int_{0}^{t}\phi'''(\langle x^{\epsilon}(s),h\rangle)|\langle\dot{x}^{\epsilon}(s),h\rangle|^{2}\langle\tilde{\mathbb{E}}\eta^{\epsilon}(s,x^{\epsilon}(s)),h \rangle ds\nonumber\\
&&-\frac{\epsilon^{\frac{3}{2}}K}{\alpha^{2}}\int_{0}^{t}\phi''(\langle x^{\epsilon}(s),h\rangle)\langle\dot{x}^{\epsilon}(s),h\rangle\langle\tilde{\mathbb{E}}\partial_{t}\eta^{\epsilon}(s,x^{\epsilon}(s)),h \rangle ds\nonumber
\end{eqnarray}
\begin{eqnarray}
&&-\frac{\epsilon^{\frac{3}{2}}K}{\alpha^{2}}\int_{0}^{t}\phi''(\langle x^{\epsilon}(s),h\rangle)\langle\dot{x}^{\epsilon}(s),h\rangle\langle\tilde{\mathbb{E}}\nabla_{x}\eta^{\epsilon}(s,x^{\epsilon}(s))\dot{x}^{\epsilon}(s),h \rangle ds\nonumber\\
&&+\frac{\sqrt{\epsilon}K}{\alpha^{2}}\int_{0}^{t}\phi''(\langle x^{\epsilon}(s),h\rangle)\langle\nabla V(x^{\epsilon}(s),\mu_{s}^{\epsilon}),h\rangle\langle\tilde{\mathbb{E}}\eta^{\epsilon}(s,x^{\epsilon}(s)),h \rangle ds\nonumber\\
&&-\frac{K}{\alpha^{2}}\int_{0}^{t}\phi''(\langle x^{\epsilon}(s),h\rangle)|\langle\tilde{\mathbb{E}}\eta^{\epsilon}(s,x^{\epsilon}(s)),h \rangle|^{2}ds+\frac{K\beta}{\alpha\sqrt{\epsilon}}\int_{0}^{t}\phi'(\langle x^{\epsilon}(s),h\rangle)\langle\tilde{\mathbb{E}}\eta^{\epsilon}(s,x^{\epsilon}(s)),h\rangle ds\nonumber\\
&&+\frac{\epsilon^{\frac{3}{2}}K}{\alpha^{2}}[\phi'(\langle x^{\epsilon}(t),h\rangle)\langle\tilde{\mathbb{E}}\nabla_{x}\eta^{\epsilon}(t,x^{\epsilon}(t))\dot{x}^{\epsilon}(t),h\rangle-\phi'(\langle x^{\epsilon}(0),h\rangle)\langle\tilde{\mathbb{E}}\nabla_{x}\eta^{\epsilon}(0,x^{\epsilon}(0))\dot{x}^{\epsilon}(0),h\rangle]\nonumber\\
&&-\frac{\epsilon^{\frac{3}{2}}K}{\alpha^{2}}\int_{0}^{t}\phi''(\langle x^{\epsilon}(s),h\rangle)\langle\dot{x}^{\epsilon}(s),h\rangle\langle\tilde{\mathbb{E}}\nabla_{x}\eta^{\epsilon}(s,x^{\epsilon}(s))\dot{x}^{\epsilon}(s),h\rangle ds\nonumber\\
&&-\frac{\epsilon^{\frac{3}{2}}K}{\alpha^{2}}\int_{0}^{t}\phi'(\langle x^{\epsilon}(s),h\rangle)\langle\tilde{\mathbb{E}}\partial_{t}\nabla_{x}\eta^{\epsilon}(s,x^{\epsilon}(s))\dot{x}^{\epsilon}(s),h\rangle ds\nonumber\\
&&-\frac{\epsilon^{\frac{3}{2}}K}{\alpha^{2}}\int_{0}^{t}\phi'(\langle x^{\epsilon}(s),h\rangle)\langle\tilde{\mathbb{E}}\nabla_{x}^{2}\eta^{\epsilon}(s,x^{\epsilon}(s))\cdot(\dot{x}^{\epsilon}(s),\dot{x}^{\epsilon}(s)),h\rangle ds\nonumber\\
&=&\frac{\sqrt{\epsilon}K}{\alpha}\phi'(\langle x^{\epsilon}(t),h\rangle)\langle\tilde{\mathbb{E}}\eta^{\epsilon}(t,x^{\epsilon}(t)),h \rangle+\frac{\epsilon^{\frac{3}{2}}K}{\alpha^{2}}[\phi''(\langle x^{\epsilon}(t),h\rangle)\langle\tilde{\mathbb{E}}\eta^{\epsilon}(t,x^{\epsilon}(t)),h \rangle\langle\dot{x}^{\epsilon}(t),h\rangle\nonumber\\
&&\quad-\phi''(\langle x^{\epsilon}(0),h\rangle)\langle\tilde{\mathbb{E}}\eta^{\epsilon}(0,x^{\epsilon}(0)),h \rangle\langle\dot{x}^{\epsilon}(0),h\rangle]\nonumber\\
&&-\frac{\epsilon^{\frac{3}{2}}K}{\alpha^{2}}\int_{0}^{t}\phi'''(\langle x^{\epsilon}(s),h\rangle)|\langle\dot{x}^{\epsilon}(s),h\rangle|^{2}\langle\tilde{\mathbb{E}}\eta^{\epsilon}(s,x^{\epsilon}(s)),h \rangle ds\nonumber\\
&&-\frac{\epsilon^{\frac{3}{2}}K}{\alpha^{2}}\int_{0}^{t}\phi''(\langle x^{\epsilon}(s),h\rangle)\langle\dot{x}^{\epsilon}(s),h\rangle\langle\tilde{\mathbb{E}}\partial_{t}\eta^{\epsilon}(s,x^{\epsilon}(s)),h \rangle ds\nonumber\\
&&-\frac{\epsilon^{\frac{3}{2}}K}{\alpha^{2}}\int_{0}^{t}\phi''(\langle x^{\epsilon}(s),h\rangle)\langle\dot{x}^{\epsilon}(s),h\rangle\langle\tilde{\mathbb{E}}\nabla_{x}\eta^{\epsilon}(s,x^{\epsilon}(s))\dot{x}^{\epsilon}(s),h \rangle ds\nonumber\\
&&+\frac{\sqrt{\epsilon}K}{\alpha^{2}}\int_{0}^{t}\phi''(\langle x^{\epsilon}(s),h\rangle)\langle\nabla V(x^{\epsilon}(s),\mu_{s}^{\epsilon}),h\rangle\langle\tilde{\mathbb{E}}\eta^{\epsilon}(s,x^{\epsilon}(s)),h \rangle ds\nonumber\\
&&-\frac{K}{\alpha^{2}}\int_{0}^{t}\phi''(\langle x^{\epsilon}(s),h\rangle)|\langle\tilde{\mathbb{E}}\eta^{\epsilon}(s,x^{\epsilon}(s)),h \rangle|^{2}ds+K\beta[\phi(\langle x^{\epsilon}(t),h\rangle)-\phi(\langle x^{\epsilon}(0),h\rangle)]\nonumber\\
&&+\frac{K\beta\epsilon}{\alpha}[(\phi'(\langle x^{\epsilon}(t),h\rangle)\langle\dot{x}^{\epsilon}(t),h\rangle-\phi'(\langle x^{\epsilon}(0),h\rangle)\langle\dot{x}^{\epsilon}(0),h\rangle)]\nonumber\\
&&-\frac{K\beta\epsilon}{\alpha}\int_{0}^{t}\phi''(\langle x^{\epsilon}(s),h\rangle)|\langle\dot{x}^{\epsilon}(s),h\rangle|^{2}ds+\frac{K\beta}{\alpha}\int_{0}^{t}\phi'(\langle x^{\epsilon}(s),h\rangle)\langle\nabla V(x^{\epsilon}(s),\mu_{s}^{\epsilon}),h\rangle ds\nonumber\\
&&+\frac{\epsilon^{\frac{3}{2}}K}{\alpha^{2}}[\phi'(\langle x^{\epsilon}(t),h\rangle)\langle \tilde{\mathbb{E}}\nabla_{x}\eta^{\epsilon}(t,x^{\epsilon}(t))\dot{x}^{\epsilon}(t),h\rangle-\phi'(\langle x^{\epsilon}(0),h\rangle)\langle \tilde{\mathbb{E}}\nabla_{x}\eta^{\epsilon}(0,x^{\epsilon}(0))\dot{x}^{\epsilon}(0),h\rangle]\nonumber\\
&&-\frac{\epsilon^{\frac{3}{2}}K}{\alpha^{2}}\int_{0}^{t}\phi''(\langle x^{\epsilon}(s),h\rangle)\langle\dot{x}^{\epsilon}(s),h\rangle\langle\tilde{\mathbb{E}}\nabla_{x}\eta^{\epsilon}(s,x^{\epsilon}(s))\dot{x}^{\epsilon}(s),h\rangle ds\nonumber\\
&&-\frac{\epsilon^{\frac{3}{2}}K}{\alpha^{2}}\int_{0}^{t}\phi'(\langle x^{\epsilon}(s),h\rangle)\langle\tilde{\mathbb{E}}\partial_{t}\nabla_{x}\eta^{\epsilon}(s,x^{\epsilon}(s))\dot{x}^{\epsilon}(s),h \rangle ds\nonumber\\
&&-\frac{\epsilon^{\frac{3}{2}}K}{\alpha^{2}}\int_{0}^{t}\phi'(\langle x^{\epsilon}(s),h\rangle)\langle\tilde{\mathbb{E}}\nabla_{x}^{2}\eta^{\epsilon}(s,x^{\epsilon}(s))(\dot{x}^{\epsilon}(s),\dot{x}^{\epsilon}(s)),h\rangle ds,
\end{eqnarray}
where we have used (\ref{equ:main}) and integration by parts in the second equality and (\ref{DA}) is used in the last equality.

Note that
$\frac{K}{\alpha^{2}}\int_{0}^{t}\phi''(\langle x^{\epsilon}(s),h\rangle)|\langle\tilde{\mathbb{E}}\eta^{\epsilon}(s,x^{\epsilon}(s)),h \rangle|^{2}ds$ is still a singular term. For this, let $\Sigma=\mathbb{E}(\tilde{\mathbb{E}}\eta^{\epsilon}(t,x^{\epsilon}(t))\otimes\tilde{\mathbb{E}}\eta^{\epsilon}(t,x^{\epsilon}(t)))$ and define
\begin{eqnarray}
Y_{2}^{\epsilon}(t)&=&\frac{K}{\alpha^{2}}\int_{t}^{\infty}\phi''(\langle x^{\epsilon}(t),h\rangle)\mathbb{E}\Big[|\langle\tilde{\mathbb{E}}\eta^{\epsilon}(s,x^{\epsilon}(t)),h \rangle|^{2}-\langle\Sigma h,h\rangle\Big|\mathcal{F}_{0}^{\frac{t}{\epsilon}}\Big]ds\nonumber\\
&=&\frac{K^{2}\epsilon}{\alpha^{2}}\phi''(\langle x^{\epsilon}(t),h\rangle)[|\langle\tilde{\mathbb{E}}\eta^{\epsilon}(s,x^{\epsilon}(t)),h \rangle|^{2}-\langle\Sigma h,h\rangle],
\end{eqnarray}
and
\begin{eqnarray}
Z_{2}^{\epsilon}(t)&=&\lim_{\delta\to0}\frac{\mathbb{E}[Y_{2}^{\epsilon}(t+\delta)-Y_{2}^{\epsilon}(t)|\mathcal{F}_{0}^{\frac{t}{\epsilon}}]}{\delta}\nonumber\\
&=&\frac{K^{2}\epsilon}{\alpha^{2}}\lim_{\delta\to0}\frac{\mathbb{E}[\phi''(\langle x^{\epsilon}(t+\delta),h\rangle)(|\langle\tilde{\mathbb{E}}\eta^{\epsilon}(t,x^{\epsilon}(t)),h \rangle|^{2}-\langle\Sigma h,h\rangle)}{\delta}\nonumber\\
&&\quad-\frac{\phi''(\langle x^{\epsilon}(t),h\rangle)(|\langle\tilde{\mathbb{E}}\eta^{\epsilon}(t+\delta,x^{\epsilon}(t+\delta)),h \rangle|^{2}-\langle\Sigma h,h\rangle)|\mathcal{F}_{0}^{\frac{t}{\epsilon}}]}{\delta}\nonumber\\
&&+\frac{K^{2}\epsilon}{\alpha^{2}}\lim_{\delta\to0}\frac{\mathbb{E}[\phi''(\langle x^{\epsilon}(t),h\rangle)(|\langle\tilde{\mathbb{E}}\eta^{\epsilon}(t+\delta,x^{\epsilon}(t+\delta)),h \rangle|^{2}-\langle\Sigma h,h\rangle)}{\delta}\nonumber\\
&&\quad-\frac{\phi''(\langle x^{\epsilon}(t),h\rangle)(|\langle\tilde{\mathbb{E}}\eta^{\epsilon}(t,x^{\epsilon}(t+\delta)),h \rangle|^{2}-\langle\Sigma h,h\rangle)|\mathcal{F}_{0}^{\frac{t}{\epsilon}}]}{\delta}\nonumber\\
&&+\frac{K^{2}\epsilon}{\alpha^{2}}\lim_{\delta\to0}\frac{\mathbb{E}[\phi''(\langle x^{\epsilon}(t),h\rangle)(|\langle\tilde{\mathbb{E}}\eta^{\epsilon}(t,x^{\epsilon}(t+\delta)),h \rangle|^{2}-\langle\Sigma h,h\rangle)}{\delta}\nonumber
\end{eqnarray} 
\begin{eqnarray}
&&\quad-\frac{\phi''(\langle x^{\epsilon}(t),h\rangle)(|\langle\tilde{\mathbb{E}}\eta^{\epsilon}(t,x^{\epsilon}(t)),h \rangle|^{2}-\langle\Sigma h,h\rangle)|\mathcal{F}_{0}^{\frac{t}{\epsilon}}]}{\delta}\nonumber\\
&=&\frac{K^{2}\epsilon}{\alpha^{2}}\phi'''(\langle x^{\epsilon}(t),h\rangle)\langle \dot{x}^{\epsilon}(t),h \rangle(|\langle\tilde{\mathbb{E}}\eta^{\epsilon}(t,x^{\epsilon}(t)),h \rangle|^{2}-\langle\Sigma h,h\rangle)\nonumber\\
&&-\frac{K^{2}\beta}{\alpha^{2}}\phi''(\langle x^{\epsilon}(t),h\rangle)(|\langle\tilde{\mathbb{E}}\eta^{\epsilon}(t,x^{\epsilon}(t)),h \rangle|^{2}-\langle\Sigma h,h\rangle)\nonumber\\
&&+\frac{2K^{2}\epsilon}{\alpha^{2}}\phi''(\langle x^{\epsilon}(t),h\rangle)\langle\tilde{\mathbb{E}}\eta^{\epsilon}(t,x^{\epsilon}(t)),h \rangle\langle\tilde{\mathbb{E}}\nabla_{x}\eta^{\epsilon}(t,x^{\epsilon}(t))\dot{x}^{\epsilon}(t),h \rangle,\nonumber
\end{eqnarray}
then $Y_{2}^{\epsilon}(t)-\int_{0}^{t}Z_{2}^{\epsilon}(s)ds$ is a $\mathcal{F}_{0}^{\frac{t}{\epsilon}}-$martingale. That is 
\begin{eqnarray}
M_{2}^{\epsilon}(t)&\triangleq&Y_{2}^{\epsilon}(t)-\int_{0}^{t}Z_{2}^{\epsilon}(s)ds\nonumber\\
&=&\frac{K^{2}\epsilon}{\alpha^{2}}\phi''(\langle x^{\epsilon}(s),h\rangle)[|\langle\tilde{\mathbb{E}}\eta^{\epsilon}(t,x^{\epsilon}(t)),h \rangle|^{2}-\langle\Sigma h,h\rangle]\nonumber\\
&&-\frac{K^{2}\epsilon}{\alpha^{2}}\int_{0}^{t}\phi'''(\langle x^{\epsilon}(s),h\rangle)\langle \dot{x}^{\epsilon}(s),h \rangle(|\langle\tilde{\mathbb{E}}\eta^{\epsilon}(s,x^{\epsilon}(s)),h \rangle|^{2}-\langle\Sigma h,h\rangle)ds\nonumber\\
&&+\frac{K^{2}\beta}{\alpha^{2}}\int_{0}^{t}\phi''(\langle x^{\epsilon}(s),h\rangle)(|\langle\tilde{\mathbb{E}}\eta^{\epsilon}(s,x^{\epsilon}(s)),h \rangle|^{2}-\langle\Sigma h,h\rangle)ds\nonumber\\
&&-\frac{2K^{2}\epsilon}{\alpha^{2}}\int_{0}^{t}\phi''(\langle x^{\epsilon}(s),h\rangle)\langle\tilde{\mathbb{E}}\eta^{\epsilon}(s,x^{\epsilon}(s)),h \rangle\langle\tilde{\mathbb{E}}\nabla_{x}\eta^{\epsilon}(s,x^{\epsilon}(s))\dot{x}^{\epsilon}(s),h \rangle ds,\nonumber
\end{eqnarray}
is a $\mathcal{F}_{0}^{\frac{t}{\epsilon}}$-martingale. Then
\begin{eqnarray}
&&M^{\epsilon}(t)=\frac{1}{K\beta}M_{1}^{\epsilon}(t)+\frac{1}{K^{2}\beta^{2}}M_{2}^{\epsilon}(t)\nonumber\\
&=&\phi(\langle x^{\epsilon}(t),h\rangle)-\phi(\langle x^{\epsilon}(0),h\rangle)+\frac{\sqrt{\epsilon}}{\alpha\beta}\phi'(\langle x^{\epsilon}(t),h\rangle)\langle\tilde{\mathbb{E}}\eta^{\epsilon}(t,x^{\epsilon}(t)),h \rangle\nonumber\\
&&+\frac{\epsilon^{\frac{3}{2}}}{\alpha^{2}\beta}[\phi''(\langle x^{\epsilon}(t),h\rangle)\langle\tilde{\mathbb{E}}\eta^{\epsilon}(t,x^{\epsilon}(t)),h \rangle\langle\dot{x}^{\epsilon}(t),h\rangle\nonumber\\
&&\quad-\phi''(\langle x^{\epsilon}(0),h\rangle)\langle\tilde{\mathbb{E}}\eta^{\epsilon}(0,x^{\epsilon}(0)),h \rangle\langle\dot{x}^{\epsilon}(0),h\rangle]\nonumber\\
&&-\frac{\epsilon^{\frac{3}{2}}}{\alpha^{2}\beta}\int_{0}^{t}\phi'''(\langle x^{\epsilon}(s),h\rangle)|\langle\dot{x}^{\epsilon}(s),h\rangle|^{2}\langle\tilde{\mathbb{E}}\eta^{\epsilon}(s,x^{\epsilon}(s)),h \rangle ds\nonumber\\
&&-\frac{\epsilon^{\frac{3}{2}}}{\alpha^{2}\beta}\int_{0}^{t}\phi''(\langle x^{\epsilon}(s),h\rangle)\langle\dot{x}^{\epsilon}(s),h\rangle\langle\tilde{\mathbb{E}}\partial_{t}\eta^{\epsilon}(s,x^{\epsilon}(s)),h \rangle ds\nonumber\\
&&-\frac{\epsilon^{\frac{3}{2}}}{\alpha^{2}\beta}\int_{0}^{t}\phi''(\langle x^{\epsilon}(s),h\rangle)\langle\dot{x}^{\epsilon}(s),h\rangle\langle\tilde{\mathbb{E}}\nabla_{x}\eta^{\epsilon}(s,x^{\epsilon}(s))\dot{x}^{\epsilon}(s),h \rangle ds\nonumber\\
&&-\frac{\sqrt{\epsilon}}{\alpha^{2}\beta}\int_{0}^{t}\phi''(\langle x^{\epsilon}(s),h\rangle)\langle\nabla V(x^{\epsilon}(s),\mu_{s}^{\epsilon}),h\rangle\langle\tilde{\mathbb{E}}\eta^{\epsilon}(s,x^{\epsilon}(s)),h \rangle ds\nonumber\\
&&-\frac{1}{\alpha^{2}\beta}\int_{0}^{t}\phi''(\langle x^{\epsilon}(s),h\rangle)\langle\Sigma h,h \rangle ds+\frac{\epsilon}{\alpha}[\phi'(\langle x^{\epsilon}(t),h\rangle)\langle\dot{x}^{\epsilon}(t),h\rangle-\phi'(\langle x^{\epsilon}(0),h\rangle)\langle\dot{x}^{\epsilon}(0),h\rangle]\nonumber\\
&&-\frac{\epsilon}{\alpha}\int_{0}^{t}\phi''(\langle x^{\epsilon}(s),h\rangle)|\langle\dot{x}^{\epsilon}(s),h\rangle|^{2}ds+\frac{1}{\alpha}\int_{0}^{t}\phi'(\langle x^{\epsilon}(s),h\rangle)\langle\nabla V(x^{\epsilon}(s),\mu_{s}^{\epsilon}),h\rangle ds\nonumber\\
&&+\frac{\epsilon^{\frac{3}{2}}}{\alpha^{2}\beta}[\phi'(\langle x^{\epsilon}(t),h\rangle)\langle \tilde{\mathbb{E}}\nabla_{x}\eta^{\epsilon}(t,x^{\epsilon}(t))\dot{x}^{\epsilon}(t),h\rangle\nonumber\\
&&\quad-\phi'(\langle x^{\epsilon}(0),h\rangle)\langle \tilde{\mathbb{E}}\nabla_{x}\eta^{\epsilon}(0,x^{\epsilon}(0))\dot{x}^{\epsilon}(0),h\rangle]\nonumber\\
&&-\frac{\epsilon^{\frac{3}{2}}}{\alpha^{2}\beta}\int_{0}^{t}\phi''(\langle x^{\epsilon}(s),h\rangle)\langle\dot{x}^{\epsilon}(s),h\rangle\langle\tilde{\mathbb{E}}\nabla_{x}\eta^{\epsilon}(s,x^{\epsilon}(s))\dot{x}^{\epsilon}(s),h\rangle ds\nonumber\\
&&-\frac{\epsilon^{\frac{3}{2}}}{\alpha^{2}\beta}\int_{0}^{t}\phi'(\langle x^{\epsilon}(s),h\rangle)\langle\tilde{\mathbb{E}}\partial_{t}\nabla_{x}\eta^{\epsilon}(s,x^{\epsilon}(s)),h \rangle ds\nonumber\\
&&-\frac{\epsilon^{\frac{3}{2}}}{\alpha^{2}\beta}\int_{0}^{t}\phi'(\langle x^{\epsilon}(s),h\rangle)\langle\tilde{\mathbb{E}}\nabla_{x}^{2}\eta^{\epsilon}(s,x^{\epsilon}(s))(\dot{x}^{\epsilon}(s),\dot{x}^{\epsilon}(s)),h\rangle ds.\nonumber\\
&&+\frac{\epsilon}{\alpha^{2}\beta^{2}}\phi''(\langle x^{\epsilon}(t),h\rangle)[|\langle\tilde{\mathbb{E}}\eta^{\epsilon}(t,x^{\epsilon}(t)),h \rangle|^{2}-\langle\Sigma h,h\rangle]\nonumber\\
&&-\frac{\epsilon}{\alpha^{2}\beta^{2}}\int_{0}^{t}\phi'''(\langle x^{\epsilon}(s),h\rangle)\langle \dot{x}^{\epsilon}(s),h \rangle(|\langle\tilde{\mathbb{E}}\eta^{\epsilon}(s,x^{\epsilon}(s)),h \rangle|^{2}-\langle\Sigma h,h\rangle)ds\nonumber\\
&&-\frac{2\epsilon}{\alpha^{2}\beta^{2}}\int_{0}^{t}\phi''(\langle x^{\epsilon}(s),h\rangle)\langle\tilde{\mathbb{E}}\eta^{\epsilon}(s,x^{\epsilon}(s)),h \rangle\langle\tilde{\mathbb{E}}\nabla_{x}\eta^{\epsilon}(s,x^{\epsilon}(s))\dot{x}^{\epsilon}(s),h \rangle ds,\nonumber\\
&\triangleq& \phi(\langle x^{\epsilon}(t),h\rangle)-\phi(\langle x^{\epsilon}(0),h\rangle)+\frac{1}{\alpha}\int_{0}^{t}\phi'(\langle x^{\epsilon}(s),h\rangle)\langle\nabla V(x^{\epsilon}(s),\mu_{s}^{\epsilon}),h\rangle ds\nonumber\\
&&-\frac{1}{\alpha^{2}\beta}\int_{0}^{t}\phi''(\langle x^{\epsilon}(s),h\rangle)\langle\Sigma h,h \rangle ds+R^{\epsilon}(t),\nonumber
\end{eqnarray}
where
\begin{eqnarray}
&&\mathbb{E}|R^{\epsilon}(t)|=\mathbb{E}\Big|\frac{\sqrt{\epsilon}}{\alpha\beta}\phi'(\langle x^{\epsilon}(t),h\rangle)\langle\tilde{\mathbb{E}}\eta^{\epsilon}(t,x^{\epsilon}(t)),h \rangle\nonumber\\
&&+\frac{\epsilon^{\frac{3}{2}}}{\alpha^{2}\beta}[\phi''(\langle x^{\epsilon}(t),h\rangle)\langle\tilde{\mathbb{E}}\eta^{\epsilon}(t,x^{\epsilon}(t)),h \rangle\langle\dot{x}^{\epsilon}(t),h\rangle-\phi''(\langle x^{\epsilon}(0),h\rangle)\langle\tilde{\mathbb{E}}\eta^{\epsilon}(0,x^{\epsilon}(0)),h \rangle\langle\dot{x}^{\epsilon}(0),h\rangle]\nonumber\\
&&-\frac{\epsilon^{\frac{3}{2}}}{\alpha^{2}\beta}\int_{0}^{t}\phi'''(\langle x^{\epsilon}(s),h\rangle)|\langle\dot{x}^{\epsilon}(s),h\rangle|^{2}\langle\tilde{\mathbb{E}}\eta^{\epsilon}(s,x^{\epsilon}(s)),h \rangle ds\nonumber\\
&&-\frac{\epsilon^{\frac{3}{2}}}{\alpha^{2}\beta}\int_{0}^{t}\phi''(\langle x^{\epsilon}(s),h\rangle)\langle\dot{x}^{\epsilon}(s),h\rangle\langle\tilde{\mathbb{E}}\partial_{t}\eta^{\epsilon}(s,x^{\epsilon}(s)),h \rangle ds\nonumber\\
&&-\frac{\epsilon^{\frac{3}{2}}}{\alpha^{2}\beta}\int_{0}^{t}\phi''(\langle x^{\epsilon}(s),h\rangle)\langle x^{\epsilon}(s),h\rangle\langle\tilde{\mathbb{E}}\nabla_{x}\eta^{\epsilon}(s,x^{\epsilon}(s))\dot{x}^{\epsilon}(s),h \rangle ds\nonumber\\
&&-\frac{\sqrt{\epsilon}}{\alpha^{2}\beta}\int_{0}^{t}\phi''(\langle x^{\epsilon}(s),h\rangle)\langle\nabla V(x^{\epsilon}(s),\mu_{s}^{\epsilon}),h\rangle\langle\tilde{\mathbb{E}}\eta^{\epsilon}(s,x^{\epsilon}(s)),h \rangle ds\nonumber\\
&&+\frac{\epsilon}{\alpha}[\phi'(\langle x^{\epsilon}(t),h\rangle)\langle\dot{x}^{\epsilon}(t),h\rangle-\phi'(\langle x^{\epsilon}(0),h\rangle)\langle\dot{x}^{\epsilon}(0),h\rangle]\nonumber\\
&&-\frac{\epsilon}{\alpha}\int_{0}^{t}\phi''(\langle x^{\epsilon}(s),h\rangle)|\langle\dot{x}^{\epsilon}(s),h\rangle|^{2}ds+\frac{\epsilon^{\frac{3}{2}}}{\alpha^{2}\beta}[\phi'(\langle x^{\epsilon}(t),h\rangle)\langle \tilde{\mathbb{E}}\nabla_{x}\eta^{\epsilon}(t,x^{\epsilon}(t))\dot{x}^{\epsilon}(t),h\rangle\nonumber\\
&&-\phi'(\langle x^{\epsilon}(0),h\rangle)\langle \tilde{\mathbb{E}}\nabla_{x}\eta^{\epsilon}(0,x^{\epsilon}(0))\dot{x}^{\epsilon}(0),h\rangle]\nonumber\\
&&-\frac{\epsilon^{\frac{3}{2}}}{\alpha^{2}\beta}\int_{0}^{t}\phi''(\langle x^{\epsilon}(s),h\rangle)\langle\dot{x}^{\epsilon}(s),h\rangle\langle\tilde{\mathbb{E}}\nabla_{x}\eta^{\epsilon}(s,x^{\epsilon}(s))\dot{x}^{\epsilon}(s),h\rangle ds\nonumber\\
&&-\frac{\epsilon^{\frac{3}{2}}}{\alpha^{2}\beta}\int_{0}^{t}\phi'(\langle x^{\epsilon}(s),h\rangle)\langle\tilde{\mathbb{E}}\partial_{t}\nabla_{x}\eta^{\epsilon}(s,x^{\epsilon}(s)),h \rangle ds\nonumber\\
&&-\frac{\epsilon^{\frac{3}{2}}}{\alpha^{2}\beta}\int_{0}^{t}\phi'(\langle x^{\epsilon}(s),h\rangle)\langle\tilde{\mathbb{E}}\nabla_{x}^{2}\eta^{\epsilon}(s,x^{\epsilon}(s))(\dot{x}^{\epsilon}(s),\dot{x}^{\epsilon}(s)),h\rangle ds.\nonumber\\
&&+\frac{\epsilon}{\alpha^{2}\beta^{2}}\phi''(\langle x^{\epsilon}(t),h\rangle)[|\langle\tilde{\mathbb{E}}\eta^{\epsilon}(t,x^{\epsilon}(t)),h \rangle|^{2}-\langle\Sigma h,h\rangle]\nonumber\\
&&-\frac{\epsilon}{\alpha^{2}\beta^{2}}\int_{0}^{t}\phi'''(\langle x^{\epsilon}(s),h\rangle)\langle \dot{x}^{\epsilon}(s),h \rangle(|\langle\tilde{\mathbb{E}}\eta^{\epsilon}(s,x^{\epsilon}(s)),h \rangle|^{2}-\langle\Sigma h,h\rangle)ds\nonumber\\
&&-\frac{2\epsilon}{\alpha^{2}\beta^{2}}\int_{0}^{t}\phi''(\langle x^{\epsilon}(s),h\rangle)\langle\tilde{\mathbb{E}}\eta^{\epsilon}(s,x^{\epsilon}(s)),h \rangle\langle\tilde{\mathbb{E}}\nabla_{x}\eta^{\epsilon}(s,x^{\epsilon}(s))\dot{x}^{\epsilon}(s),h \rangle ds\Big|,\nonumber\\
&\leq& \frac{M\sqrt{\epsilon}}{\alpha\beta}\mathbb{E}\|\tilde{\mathbb{E}}\eta^{\epsilon}(t,x^{\epsilon}(t))\|\|h\|+\frac{M\epsilon}{\alpha^{2}\beta}\mathbb{E}|\langle\tilde{\mathbb{E}}\eta^{\epsilon}(t,x^{\epsilon}(t)),h \rangle||\langle\sqrt{\epsilon}\dot{x}^{\epsilon}(t),h\rangle|\nonumber\\
&&+\frac{M\epsilon^{\frac{3}{2}}}{\alpha^{2}\beta}\int_{0}^{t}|\langle\dot{x}^{\epsilon}(s),h\rangle|^{2}|\langle\tilde{\mathbb{E}}\eta^{\epsilon}(s,x^{\epsilon}(s)),h \rangle| ds\nonumber\\
&&+\frac{M\epsilon^{\frac{3}{2}}}{\alpha^{2}\beta}\int_{0}^{t}\mathbb{E}|\langle\dot{x}^{\epsilon}(s),h\rangle||\langle\tilde{\mathbb{E}}\partial_{t}\eta^{\epsilon}(s,x^{\epsilon}(s)),h \rangle| ds\nonumber\\
&&+\frac{M\epsilon^{\frac{3}{2}}}{\alpha^{2}\beta}\int_{0}^{t}\mathbb{E}|\langle x^{\epsilon}(s),h\rangle||\langle\tilde{\mathbb{E}}\nabla_{x}\eta^{\epsilon}(s,x^{\epsilon}(s))\dot{x}^{\epsilon}(s),h \rangle| ds\nonumber
\end{eqnarray}
\begin{eqnarray}
&&+\frac{M\sqrt{\epsilon}}{\alpha^{2}\beta}\int_{0}^{t}|\langle\nabla V(x^{\epsilon}(s),\mu_{s}^{\epsilon}),h\rangle||\langle\tilde{\mathbb{E}}\eta^{\epsilon}(s,x^{\epsilon}(s)),h \rangle| ds\nonumber\\
&&+\frac{M\epsilon}{\alpha}\mathbb{E}|\langle\dot{x}^{\epsilon}(t),h\rangle|+\frac{M\epsilon}{\alpha}|\langle\dot{x}^{\epsilon}(0),h\rangle|+\frac{M\sqrt{\epsilon}}{\alpha}\int_{0}^{t}\sqrt{\epsilon}\mathbb{E}|\langle\dot{x}^{\epsilon}(s),h\rangle|^{2}ds\nonumber\\
&&+\frac{M\epsilon^{\frac{3}{2}}}{\alpha^{2}\beta}[\mathbb{E}|\langle \tilde{\mathbb{E}}\nabla_{x}\eta^{\epsilon}(t,x^{\epsilon}(t))\dot{x}^{\epsilon}(t),h\rangle|+|\langle \tilde{\mathbb{E}}\nabla_{x}\eta^{\epsilon}(0,x^{\epsilon}(0))\dot{x}^{\epsilon}(0),h\rangle|]\nonumber\\
&&+\frac{M\epsilon^{\frac{3}{2}}}{\alpha^{2}\beta}\int_{0}^{t}\mathbb{E}|\langle\dot{x}^{\epsilon}(s),h\rangle||\langle\tilde{\mathbb{E}}\nabla_{x}\eta^{\epsilon}(s,x^{\epsilon}(s))\dot{x}^{\epsilon}(s),h\rangle| ds\nonumber\\
&&+\frac{M\epsilon^{\frac{3}{2}}}{\alpha^{2}\beta}\mathbb{E}|\int_{0}^{t}\langle\tilde{\mathbb{E}}\partial_{t}\nabla_{x}\eta^{\epsilon}(s,x^{\epsilon}(s)),h \rangle| ds\nonumber\\
&&+\frac{M\epsilon^{\frac{3}{2}}}{\alpha^{2}\beta}\int_{0}^{t}\mathbb{E}|\langle\tilde{\mathbb{E}}\nabla_{x}^{2}\eta^{\epsilon}(s,x^{\epsilon}(s))(\dot{x}^{\epsilon}(s),\dot{x}^{\epsilon}(s)),h\rangle| ds+\frac{M\epsilon}{\alpha^{2}\beta^{2}}\mathbb{E}[|\langle\tilde{\mathbb{E}}\eta^{\epsilon}(t,x^{\epsilon}(t)),h \rangle|^{2}+|\langle\Sigma h,h\rangle|]\nonumber\\
&&+\frac{M\epsilon}{\alpha^{2}\beta^{2}}\int_{0}^{t}\mathbb{E}|\langle \dot{x}^{\epsilon}(s),h \rangle|(|\langle\tilde{\mathbb{E}}\eta^{\epsilon}(s,x^{\epsilon}(s)),h \rangle|^{2}+|\langle\Sigma h,h\rangle|)ds\nonumber\\
&&+\frac{2M\epsilon}{\alpha^{2}\beta^{2}}\int_{0}^{t}\mathbb{E}|\langle\tilde{\mathbb{E}}\eta^{\epsilon}(s,x^{\epsilon}(s)),h \rangle||\langle\tilde{\mathbb{E}}\nabla_{x}\eta^{\epsilon}(s,x^{\epsilon}(s))\dot{x}^{\epsilon}(s),h \rangle| ds,\nonumber\\
&\leq&\sqrt{\epsilon}\frac{M\|h\|}{\alpha\beta}M_{\eta}+\epsilon\frac{M\|h\|^{2}}{\alpha^{2}\beta}M_{\eta}(\mathbb{E}\|\sqrt{\epsilon}\dot{x}^{\epsilon}(t)\|^{2})^{\frac{1}{2}}+\sqrt{\epsilon}\frac{M\|h\|^{3}M_{\eta}}{\alpha^{2}\beta}\int_{0}^{t}(\mathbb{E}\|\sqrt{\epsilon}\dot{x}^{\epsilon}(s)\|^{4})^{\frac{1}{2}}ds\nonumber\\
&&+\epsilon\frac{M\|h\|^{2}M_{\eta}}{\alpha^{2}\beta}\int_{0}^{t}(\mathbb{E}\|\sqrt{\epsilon}\dot{x}^{\epsilon}(s)\|^{2})^{\frac{1}{2}}ds+\epsilon\frac{MC\|h\|^{2}}{\alpha^{2}\beta}\int_{0}^{t}(\mathbb{E}\|\sqrt{\epsilon}\dot{x}^{\epsilon}(s)\|^{2})^{\frac{1}{2}}(\mathbb{E}\|x^{\epsilon}(s)\|^{2})^{\frac{1}{2}}ds\nonumber\\
&&+\sqrt{\epsilon}\frac{M\|h\|^{2}}{\alpha^{2}\beta}2C_{V}M_{\eta}\int_{0}^{t}(\mathbb{E}\|x^{\epsilon}(s)\|^{2}+\|\nabla V(0,\delta_{0})\|^{2})^{\frac{1}{2}}ds+\frac{M\sqrt{\epsilon}\|h\|}{\alpha}(\mathbb{E}\|\sqrt{\epsilon}\dot{x}^{\epsilon}(s)\|^{2})^{\frac{1}{2}}\nonumber\\
&&+\frac{M\sqrt{\epsilon}\|h\|}{\alpha}\|x_{0}\|+\frac{M\sqrt{\epsilon}\|h\|^{2}}{\alpha}\int_{0}^{t}\mathbb{E}\|\sqrt{\epsilon}\dot{x}^{\epsilon}(s)\|^{2}ds+\epsilon\frac{M\|h\|}{\alpha^{2}\beta}M_{\eta}(\mathbb{E}\|\sqrt{\epsilon}\dot{x}^{\epsilon}(t)\|^{2})^{\frac{1}{2}}\nonumber\\
&&+\sqrt{\epsilon}\frac{M\|h\|^{2}}{\alpha^{2}\beta}M_{\eta}\int_{0}^{t}\mathbb{E}\|\sqrt{\epsilon}\dot{x}^{\epsilon}(s)\|^{2}ds+\epsilon^{\frac{3}{2}}\frac{M\|h\|}{\alpha^{2}\beta}M_{\eta}T+\sqrt{\epsilon}\frac{M\|h\|}{\alpha^{2}\beta}M_{\eta}\int_{0}^{t}\mathbb{E}\|\sqrt{\epsilon}\dot{x}^{\epsilon}(s)\|^{2}ds\nonumber\\
&&+\epsilon\frac{M\|h\|^{2}}{\alpha^{2}\beta}M_{\eta}^{2}+\frac{M\sqrt{\epsilon}\|h\|^{3}M_{\eta}^{2}}{\alpha^{2}\beta^{2}}\int_{0}^{t}(\mathbb{E}\|\sqrt{\epsilon}\dot{x}^{\epsilon}(s)\|^{2})^{\frac{1}{2}}ds\nonumber\\
&&+\frac{2M\sqrt{\epsilon}\|h\|}{\alpha^{2}\beta^{2}}M_{\eta}\int_{0}^{t}(\mathbb{E}\|\sqrt{\epsilon}\dot{x}^{\epsilon}(s)\|^{2})^{\frac{1}{2}}ds\nonumber\to0,\quad\epsilon\to0,
\end{eqnarray}
where $M$ denotes the boundedness of $\phi$ and its derivatives of any order.
Then $\{M^{\epsilon}(\cdot)\}_{0\leq \epsilon\leq 1}$ converges to $\{M(\cdot)\}$ as $\epsilon\to0$. Where $M(t)$ satisfies
\begin{eqnarray}
M(t)&=&\phi(\langle x^{\epsilon}(t),h\rangle)-\phi(\langle x^{\epsilon}(0),h\rangle)+\frac{1}{\alpha}\int_{0}^{t}\phi'(\langle x^{\epsilon}(s),h\rangle)\langle\nabla V(x^{\epsilon}(s),\mu_{s}^{\epsilon}),h\rangle ds\nonumber\\
&&-\frac{1}{\alpha^{2}\beta}\int_{0}^{t}\phi''(\langle x^{\epsilon}(s),h\rangle)\langle\Sigma h,h \rangle ds,
\end{eqnarray}
and by the representation theorem of martingale~\cite{M}, $\{x(t)\}$ is the unique weak solution of the following stochastic differential equation  
\begin{equation}\label{equ:thm}
dx(t)=-\frac{1}{\alpha}\nabla V(x(t),\mu_{t})dt+\frac{\sqrt{\frac{\Sigma}{\beta}}}{\alpha}dB(t),
\end{equation}
where $\Sigma=\mathbb{E}(\tilde{\mathbb{E}}\eta^{\epsilon}(t,x^{\epsilon}(t))\otimes\tilde{\mathbb{E}}\eta^{\epsilon}(t,x^{\epsilon}(t)))$ and $B(t)$ is a standard Brownian motion.

Then we have the following result
\begin{theorem}
The family $\{x^{\epsilon}(t)\}_{0<\epsilon\leq1}$ converges in distribution to $x(t)$ which satisfies (\ref{equ:thm}).
\end{theorem}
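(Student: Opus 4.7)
The plan is to identify the limit $x$ via the distribution-dependent martingale problem associated with the generator of (\ref{equ:thm}). By the tightness established in the previous section, I extract a subsequence of $\{x^{\epsilon}\}$ converging in distribution in $C(0,T;\mathbb{R}^{d})$ and invoke Skorokhod's representation theorem to realize the convergence almost surely on a common probability space. Uniqueness of weak solutions to (\ref{equ:thm}), which follows from the Lipschitz assumption $(\mathbf{H_{1}})$ on $\nabla V$ in both $x$ and the measure variable, will then promote subsequential convergence to convergence of the full family.

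To characterize the limit, I fix a test function $\phi\in C^{3}(\mathbb{R})$ and $h\in\mathbb{R}^{d}$, and expand $\phi(\langle x^{\epsilon}(t),h\rangle)-\phi(\langle x^{\epsilon}(0),h\rangle)$ by integration by parts via (\ref{x-equ})--(\ref{y-equ}). The resulting identity (\ref{DA}) contains a singular term of order $1/\sqrt{\epsilon}$ together with an order-one quadratic term in $\tilde{\mathbb{E}}\eta^{\epsilon}$ that will generate the diffusion coefficient $\langle\Sigma h,h\rangle$. To cancel both, I construct two correctors $Y_{1}^{\epsilon},Y_{2}^{\epsilon}$ as conditional integrals $\int_{t}^{\infty}\mathbb{E}[\,\cdot\,|\mathcal{F}_{0}^{t/\epsilon}]\,ds$ weighted by $m((s-t)/\epsilon)$; by $(\mathbf{H_{2}})$ these are of sizes $\sqrt{\epsilon}$ and $\epsilon$ respectively. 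Proposition \ref{proa1} yields the martingales $M_{i}^{\epsilon}=Y_{i}^{\epsilon}-\int_{0}^{\cdot}Z_{i}^{\epsilon}\,ds$, and the chain-rule computation of $Z_{i}^{\epsilon}$ (in which $\beta=-m'(0)$ appears via the differentiation of $m((s-t)/\epsilon)$ at $s=t$) reproduces exactly the singular terms with opposite sign, modulo remainders carrying positive powers of $\sqrt{\epsilon}$.

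The crucial algebraic step is then to form the linear combination $M^{\epsilon}=\frac{1}{K\beta}M_{1}^{\epsilon}+\frac{1}{K^{2}\beta^{2}}M_{2}^{\epsilon}$, tuned so that the $O(1/\sqrt{\epsilon})$ and $O(1)$ singularities both cancel, leaving the decomposition
\begin{eqnarray*}
M^{\epsilon}(t) &=& \phi(\langle x^{\epsilon}(t),h\rangle)-\phi(\langle x^{\epsilon}(0),h\rangle)+\frac{1}{\alpha}\int_{0}^{t}\phi'(\langle x^{\epsilon}(s),h\rangle)\langle\nabla V(x^{\epsilon}(s),\mu_{s}^{\epsilon}),h\rangle\,ds\\
&&-\frac{1}{\alpha^{2}\beta}\int_{0}^{t}\phi''(\langle x^{\epsilon}(s),h\rangle)\langle\Sigma h,h\rangle\,ds+R^{\epsilon}(t),
\end{eqnarray*}
where $R^{\epsilon}$ collects all remainders. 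The main obstacle will be proving $\mathbb{E}|R^{\epsilon}(t)|\to 0$: most of the many terms follow from Lemma \ref{MoEs}, the bound (\ref{equ:ub3.24}) on $u^{\epsilon}$, and assumption $(\mathbf{H_{3}})$, but several contributions require the new energy-type estimate $\sqrt{\epsilon}\int_{0}^{T}\mathbb{E}\|\dot{x}^{\epsilon}(s)\|^{2}\,ds\leq C_{T}$. To obtain this, I multiply (\ref{equ:main}) by $\sqrt{\epsilon}\dot{x}^{\epsilon}$ and integrate over $[0,t]$; on the $\eta$-term I integrate by parts in time (a direct bound would cost an unaffordable extra $1/\sqrt{\epsilon}$) and then apply the mixing inequality to the resulting double-time integral with kernel $e^{-\alpha(s-u)/\epsilon}$.

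Once $\mathbb{E}|R^{\epsilon}(t)|\to 0$ is verified, the almost sure convergence of $x^{\epsilon}$ to $x$ and the continuity of $\phi$, $\phi'$, $\phi''$ allow passage to the limit in the surviving integrals, yielding a limit process $M(t)$ that is a martingale (uniform integrability of $\{M^{\epsilon}\}$ is inherited from the uniform second-moment bound on the corrector martingales). Since $\phi\in C^{3}(\mathbb{R})$ and $h\in\mathbb{R}^{d}$ are arbitrary, this identifies $x$ as a solution of the martingale problem for the generator $\mathcal{L}\phi(\langle\cdot,h\rangle)=-\alpha^{-1}\phi'(\langle\cdot,h\rangle)\langle\nabla V(\cdot,\mu),h\rangle+(\alpha^{2}\beta)^{-1}\phi''(\langle\cdot,h\rangle)\langle\Sigma h,h\rangle$, and the martingale representation theorem rewrites this as the SDE (\ref{equ:thm}) driven by a standard Brownian motion. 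The subtle point throughout is that the cancellation of the $1/\sqrt{\epsilon}$ singularity is only made possible by pairing the martingale corrector mechanism with the mixing assumption $(\mathbf{H_{2}})$.
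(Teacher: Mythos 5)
Your proposal follows essentially the same route as the paper: the same test-function expansion (\ref{DA}), the same two martingale correctors built from $\int_{t}^{\infty}\mathbb{E}[\,\cdot\,|\mathcal{F}_{0}^{t/\epsilon}]\,ds$ via Proposition \ref{proa1}, the same combination $M^{\epsilon}=\frac{1}{K\beta}M_{1}^{\epsilon}+\frac{1}{K^{2}\beta^{2}}M_{2}^{\epsilon}$, the same energy estimate $\sqrt{\epsilon}\int_{0}^{T}\mathbb{E}\|\dot{x}^{\epsilon}(s)\|^{2}ds\leq C_{T}$ obtained by multiplying by $\sqrt{\epsilon}\dot{x}^{\epsilon}$ and integrating the $\eta$-term by parts, and the same passage to the martingale problem and representation theorem. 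Your explicit remark that weak uniqueness of (\ref{equ:thm}) upgrades subsequential to full convergence is a welcome clarification that the paper leaves implicit, but it does not change the argument.
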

So we complete the proof of Theorem \ref{equ:thm1}.

\end{document}